\newtheorem{same}{This should never appear}[section]
\newtheorem{defin}[same]{Definition}
\newtheorem{remark}[same]{Remark}
\newtheorem{theorem}[same]{Theorem}
\newtheorem{thm}[same]{Theorem}
\newtheorem{example}[same]{Example}
\newtheorem{lemma}[same]{Lemma}
\newtheorem{fact}[same]{Fact}
\newtheorem{question}[same]{Question}
\newtheorem{cor}[same]{Corollary}
\newtheorem{defin*}{Definition}
\newtheorem*{theorem*}{Theorem}
\newtheorem*{theorem**}{Theorem 1.2}
\newbox\noforkbox \newdimen\forklinewidth
\noforkbox\hbox{\lower 2pt\box1\lower 2pt\box0\relax}
\def\unionstick{\mathop{\copy\noforkbox}\limits}
\def\nonfork_#1{\unionstick_{\textstyle #1}}
\newbox\doesforkbox
\doesforkbox\hbox{\lower 2pt\box1 \lower 2pt\box2\lower2pt\box0\relax}
\def\nunionstick{\mathop{\copy\doesforkbox}\limits}
\def\fork_#1{\nunionstick_{\textstyle #1}}
\newcommand{\Mod}{\textrm{Mod}}
\newcommand{\ba}{\mathbf{a}}
\newcommand{\bb}{\mathbf{b}}
\newcommand{\bx}{\mathbf{x}}
\newcommand{\rest}{\upharpoonright}
\newcommand{\K}{\mathbf{K}}
\newcommand{\LS}{\operatorname{LS}}
\def\lee{\preceq}
\newcommand{\Ll}{\mathbb{L}}
\newcommand{\leap}[1]{\le_{#1}}
\newcommand{\geap}[1]{\ge_{#1}}
\newcommand{\lea}{\leap{\K}}
\newcommand{\gea}{\geap{\K}}
\newcommand{\seq}[1]{\langle #1 \rangle}
\newcommand{\tp}{\operatorname{tp}}
\newcommand{\gtp}{\mathbf{tp}}
\newcommand{\gS}{\mathbf{S}}
\newcommand{\fct}[2]{{}^{#1}#2}
\newcommand{\Ii}{\mathbb{I}}
\newcommand{\muwd}{\mu_{\text{wd}}}
\newcommand{\PC}{\operatorname{PC}}
\title{Universal classes near $\aleph_1$}
\date{\today. \\
  AMS 2010 Subject Classification: Primary 03C48. Secondary: 03C45, 03C52, 03C55, 03C75.}
  \keywords{Abstract elementary classes; Universal classes; Categoricity; Good frames; Tameness; Prime models}
\author{Marcos Mazari-Armida}
\email{mmazaria@andrew.cmu.edu}
\urladdr{http://www.math.cmu.edu/~mmazaria/ }
\address{Department of Mathematical Sciences \\ Carnegie Mellon University \\ Pittsburgh, Pennsylvania, USA}
\author{Sebastien Vasey}
\email{sebv@math.harvard.edu}
\urladdr{http://math.harvard.edu/\textasciitilde sebv/}
\address{Department of Mathematics \\ Harvard University \\ Cambridge, Massachusetts, USA}
\begin{document}

\begin{abstract}
  Shelah has provided sufficient conditions for an $\Ll_{\omega_1, \omega}$-sentence $\psi$ to have arbitrarily large models and for a Morley-like theorem to hold of $\psi$. These conditions involve structural and set-theoretic assumptions on all the $\aleph_n$'s. Using tools of Boney, Shelah, and the second author, we give assumptions on $\aleph_0$ and $\aleph_1$ which suffice when $\psi$ is restricted to be universal:

\begin{theorem*}
  Assume $2^{\aleph_{0}} < 2 ^{\aleph_{1}}$. Let $\psi$ be a universal $\Ll_{\omega_{1}, \omega}$-sentence.

  \begin{enumerate}
  \item If $\psi$ is categorical in $\aleph_{0}$ and $1 \leq \Ii(\psi, \aleph_{1}) < 2 ^{\aleph_{1}}$, then $\psi$ has arbitrarily large models and categoricity of $\psi$ in some uncountable cardinal implies categoricity of $\psi$ in all uncountable cardinals.
  \item If $\psi$ is categorical in $\aleph_1$, then $\psi$ is categorical in all uncountable cardinals.
  \end{enumerate}
\end{theorem*}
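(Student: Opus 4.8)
The plan is to recognize $\K := (\Mod(\psi), \subseteq)$ as an abstract elementary class and route the argument through good frames, letting the tameness that comes for free from universality play the role of Shelah's hypotheses on the higher $\aleph_n$'s. Since $\psi$ is universal, $\K$ is a universal class: $\LS(\K) = \aleph_0$, $\K$ admits intersections and hence has prime models over sets, and $\K$ is fully $(<\aleph_0)$-tame and short. These are the only features of universality the proof uses; everything below is stated for a tame AEC with $\LS$-number $\aleph_0$ that has prime models.

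Step 1 (the $\aleph_0$–$\aleph_1$ picture). For (1), feed $2^{\aleph_0} < 2^{\aleph_1}$, $\aleph_0$-categoricity and $1 \le \Ii(\psi,\aleph_1) < 2^{\aleph_1}$ into Shelah's weak-diamond machinery: $\K$ has amalgamation, joint embedding and no maximal models in $\aleph_0$ (joint embedding from $\aleph_0$-categoricity, no maximal models since $\Ii(\psi,\aleph_1) \ge 1$), is stable in $\aleph_0$, and has unique limit models over countable models — equivalently, $\K$ is $\aleph_0$-superstable with the $\aleph_1$-sized model superlimit. For (2), categoricity in $\aleph_1 = \aleph_0^+$ plays the role of this input: by the non-amalgamation-implies-many-models dichotomy it gives amalgamation in $\aleph_0$, and $\aleph_1$-categoricity again supplies the needed $\aleph_0$-superstability — if necessary after passing to the subclass $\K^\ast$ of countable models admitting arbitrarily large extensions, which coincides with $\K$ in every uncountable cardinal. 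I would also record here that $\aleph_1$-categoricity of a universal sentence forces $\aleph_0$-categoricity (the unique $\aleph_1$-model is model-homogeneous over its countable substructures), so that (2) is in fact an instance of the hypotheses of (1).

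Step 2 (build and lift a good frame). From $\aleph_0$-superstability and $(<\aleph_0)$-tameness, construct an (almost) good $\aleph_0$-frame on $\K_{\aleph_0}$: take forking to be $\aleph_0$-nonsplitting (or coheir), obtain density of basic types and local character from superstability, uniqueness and the extension property from tameness, and symmetry from the combination. Then apply the tame-frame-transfer theorems of Boney and the second author to extend it to a good $(\ge\aleph_1)$-frame on $\K_{\ge\aleph_1}$. In particular $\K$ has amalgamation, joint embedding and no maximal models in every uncountable cardinal, and being closed under unions of chains it therefore has arbitrarily large models — the first assertion of (1).

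Step 3 (categoricity transfer) and wrap-up. In a good $(\ge\aleph_1)$-frame that is tame and has prime models, categoricity transfers both upward and downward through all cardinals $\ge \aleph_1$, by the work of Grossberg and the second author on categoricity in tame AECs with primes together with downward categoricity in tame AECs with amalgamation. Hence if $\psi$ is categorical in some uncountable $\kappa$ it is categorical in every $\lambda \ge \aleph_1$, the second assertion of (1); applying this with $\kappa = \aleph_1$ gives (2). The step I expect to be the main obstacle is the frame construction and its upward extension under only the $\aleph_0$–$\aleph_1$ hypotheses: checking that $\aleph_0$-superstability plus tameness really produces a (semi-)good $\aleph_0$-frame without Shelah's control of $\aleph_2$ and above, and that the extension to $\ge \aleph_1$ goes through; the reduction of (2) — recovering $\aleph_0$-categoricity, or else managing the subclass $\K^\ast$ — is the other delicate point.
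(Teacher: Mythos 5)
Your treatment of part (1) follows essentially the paper's route (weak diamond to get amalgamation in $\aleph_0$, Keisler-style arguments to get $\aleph_0$-stability, a good $\aleph_0$-frame from stability plus the free $(<\aleph_0)$-tameness of universal classes, then the frame-transfer and categoricity-transfer machinery for tame AECs with primes), and modulo the somewhat loose intermediate step ``superstability/unique limit models $\Rightarrow$ good $\aleph_0$-frame'' --- the paper instead quotes a result needing $\aleph_0$-categoricity, amalgamation, no maximal models, tameness and stability, all of which you do have --- this half is fine.

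Part (2), however, has a genuine gap. Your reduction rests on the claim that $\aleph_1$-categoricity of a universal sentence forces $\aleph_0$-categoricity, and this is false: take the vocabulary $\{c_n : n<\omega\}$ and the quantifier-free (hence universal) sentence $\bigwedge_{i\neq j} c_i \neq c_j$. Its class of models is categorical in every uncountable cardinal, yet it has infinitely many countable models (distinguished by the number of non-constant elements), and in this example \emph{every} countable model embeds into arbitrarily large models, so your fallback subclass ``countable models admitting arbitrarily large extensions'' is still not $\aleph_0$-categorical. This matters because the very first step of your argument for (2) --- amalgamation in $\aleph_0$ from the non-amalgamation/many-models dichotomy --- requires categoricity in $\aleph_0$ (the dichotomy is: categoricity in $\lambda$ plus few models in $\lambda^+$ under $2^\lambda<2^{\lambda^+}$ gives amalgamation in $\lambda$), and the construction of the good $\aleph_0$-frame needs it as well. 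The paper handles this differently: from $1\le \Ii(\psi,\aleph_1)<2^{\aleph_1}$ one passes to the models of a \emph{complete} $\Ll_{\omega_1,\omega}$-sentence $\psi_0$ implying $\psi$ with an uncountable model, ordered by elementarity in a countable fragment; this auxiliary class $\K^\ast$ is $\aleph_0$-categorical and carries a type-full good $\aleph_0$-frame, and one then shows $\K^\ast_{\ge\aleph_1}=\K_{\ge\aleph_1}$ (using $\aleph_1$-categoricity of $\K$ and the fact that in a $\lambda$-categorical AEC with $\lambda=\lambda^{<\mu}$ the ordering refines $\Ll_{\infty,\mu}$-elementarity, with $\lambda=\aleph_1$, $\mu=\aleph_0$). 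Tameness and primes for $\K^\ast$ are only available through this identification at $\aleph_1$, so one needs a \emph{local} frame-extension step producing a good $\aleph_1$-frame and categoricity in $\aleph_2$, after which the categoricity-transfer theorem is applied to $\K_{\ge\aleph_1}$ (whose base categoricity cardinal is $\aleph_1$, not $\aleph_0$). Without some such device your part (2) does not go through as written.
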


The theorem generalizes to the framework of $\Ll_{\omega_1, \omega}$-definable tame abstract elementary classes with primes.
\end{abstract}

\maketitle

%%%%%%%%%%%%%%%%%%%%%%%%%%%%%%%%%%%%%%%%%%%%%%%

%%%%%%%%%%%%%%%%%%%%%%%%%%%%%%%%%%%%%%%%%%%%%%%

%\tableofcontents

%%%%%%%%%%%%%%%%%%%%%%%%%%%%%%%%%%%%%%%%%%%%%%%
\section{Introduction}
In a milestone paper, Shelah \cite{sh87a, sh87b} gives the following classification-theoretic analysis of $\Ll_{\omega_1, \omega}$-sentences:

\begin{fact}\label{sh}
  Assume that $2^{\aleph_n} < 2^{\aleph_{n + 1}}$ for all $n < \omega$. Let $\psi \in \Ll_{\omega_1, \omega}$ be a complete sentence. Assume that $\psi$ has an uncountable model and for all $n > 0$, $\Ii(\psi, \aleph_{n}) < \muwd(\aleph_{n})$.\footnote{See \cite[VII.0.4]{shelahaecbook} for a definition of $\muwd$ and \cite[VII.0.5]{shelahaecbook} for some of its properties. We always have that $2^{\aleph_n} \le \muwd (\aleph_{n + 1})$.} Then $\psi$ has arbitrarily large models and categoricity of $\psi$ in \emph{some} uncountable cardinal implies categoricity of $\psi$ in \emph{all} uncountable cardinals.
\end{fact}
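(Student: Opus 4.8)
The plan is to situate $\psi$ within Shelah's theory of \emph{excellent classes}: after a standard reduction I would derive amalgamation and $\omega$-stability for countable models from the few-models hypothesis at $\aleph_{1}$, climb inductively to full $n$-dimensional amalgamation (excellence) using the weak-GCH assumptions and the bounds $\Ii(\psi,\aleph_{n})<\muwd(\aleph_{n})$, and then quote the known structural consequences of excellence. \textbf{Reformulation.} By Shelah's reduction of complete $\Ll_{\omega_{1},\omega}$-sentences to atomic classes, after expanding the vocabulary by countably many predicates one may assume that $\Mod(\psi)$, ordered by $\preceq\,=\,$elementary substructure, \emph{is} the class $K=\mathrm{At}(T)$ of atomic models (those omitting every non-isolated type) of a complete first-order theory $T$ in a countable vocabulary. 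Then $K$ is an abstract elementary class with $\LS(K)=\aleph_{0}$ and $\Ii(K,\lambda)=\Ii(\psi,\lambda)$ for all $\lambda$, so it suffices to argue about $K$.

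\textbf{Base level.} From $2^{\aleph_{0}}<2^{\aleph_{1}}$, the hypothesis that $K$ has an uncountable (hence an $\aleph_{1}$-sized) model, and $\Ii(\psi,\aleph_{1})<\muwd(\aleph_{1})$, I would invoke Shelah's non-structure theorem at $\aleph_{1}$: were amalgamation for countable members of $K$ to fail, the failures could be coded into $2^{\aleph_{1}}$ pairwise non-isomorphic models of size $\aleph_{1}$, contradicting the hypothesis. Thus $K$ has amalgamation and joint embedding for countable models, so one can define Galois (equivalently, atomic) types over countable members of $K$; a further few-models argument under the same weak GCH yields $\omega$-stability, i.e.\ only countably many types over any countable $M\in K$ are realized in extensions inside $K$.

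\textbf{Ascent to excellence (the crux).} Using amalgamation and $\omega$-stability one sets up an independence calculus on countable atomic models and, for $n<\omega$, the property that $K$ is \emph{$(n)$-good}: every independent system of countable models indexed by the proper subsets of $\{0,\dots,n\}$ has an amalgam, indeed a prime model over it. I would show by induction on $n$ that $K$ is $(n)$-good, the case $n=0$ being the amalgamation just obtained. The inductive step is the heart of the proof and the step I expect to be hardest: assuming $(n)$-goodness, a failure of $(n{+}1)$-goodness feeds, via the defect of amalgamation, a tree of models over which — here $2^{\aleph_{n}}<2^{\aleph_{n+1}}$ is used — one manufactures $\muwd(\aleph_{n+1})$ pairwise non-isomorphic models of size $\aleph_{n+1}$, contradicting $\Ii(\psi,\aleph_{n+1})<\muwd(\aleph_{n+1})$. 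Carrying out this coding demands the full apparatus of independence for atomic classes and the combinatorics packaged into $\muwd$; granting it, $K$ is $(n)$-good for all $n$, that is, $K$ is excellent.

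\textbf{Conclusion.} It remains to apply Shelah's structure theory for excellent classes. Excellence yields prime models over independent $(\lambda,n)$-systems for all $\lambda$, and assembling (or taking increasing unions of) such systems produces members of $K$ of every infinite cardinality, so $\psi$ has arbitrarily large models. For the transfer, if $K$ is categorical in some uncountable $\lambda$, categoricity excludes uncountable Vaughtian pairs and provides a minimal type, and the prime-model machinery then propagates categoricity both upward and downward to every uncountable cardinal, in the style of the Baldwin--Lachlan proof of Morley's theorem. Both facts are due to Shelah \cite{sh87a, sh87b} (with subsequent expositions, e.g.\ by Lessmann), so at this stage I would cite rather than reprove.
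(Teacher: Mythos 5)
Your outline matches the paper's treatment: the paper states this result as a Fact and cites Shelah \cite{sh87a, sh87b} without proof, and your sketch is precisely the argument of those sources --- reduce the complete sentence to an atomic class, extract amalgamation and $\omega$-stability in $\aleph_0$ from $2^{\aleph_0}<2^{\aleph_1}$ and few models in $\aleph_1$, climb by induction to $(n)$-goodness (excellence) using $2^{\aleph_n}<2^{\aleph_{n+1}}$ and $\Ii(\psi,\aleph_{n+1})<\muwd(\aleph_{n+1})$, and then invoke the structure theory of excellent classes for arbitrarily large models and the categoricity transfer. Since the hard steps you defer are exactly the ones the paper also delegates to Shelah, your proposal is correct and takes essentially the same route.
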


It is provably necessary to make hypotheses on all the $\aleph_n$'s: a family of examples of Hart and Shelah \cite{hash323} (analyzed in detail by Baldwin and Kolesnikov \cite{untame}) gives for each $n < \omega$ an $\Ll_{\omega_1, \omega}$-sentence $\psi_n$ which is categorical in $\aleph_0$, $\aleph_1$, $\ldots$, $\aleph_n$ but not in any cardinal above $\aleph_n$.

In the present paper, we show that if we restrict the complexity of the sentence, then it suffices to make model-theoretic and set-theoretic assumptions on $\aleph_0$ and $\aleph_1$. More precisely:  

\textbf{Theorem \ref{main-theorem}.}
\textit{Assume $2^{\aleph_0} < 2^{\aleph_1}$. Let $\psi$ be a universal $\Ll_{\omega_1, \omega}$ sentence (i.e.\ $\psi$ is of the form $\forall \bx \phi (\bx)$, where $\phi$ is quantifier-free). If $\psi$ is categorical in $\aleph_0$ and $1\leq \Ii(\psi, \aleph_1) < 2^{\aleph_1}$, then:
\begin{enumerate}
\item $\psi$ has arbitrarily large models.
\item If $\psi$ is categorical in \emph{some} uncountable cardinal then $\psi$ is categorical in \emph{all} uncountable cardinals.
\end{enumerate}}

We more generally prove Theorem \ref{main-theorem} for universal classes (classes of models closed under isomorphisms, substructures, and unions of $\subseteq$-increasing chains, see Definition \ref{univ-def} and Fact \ref{tarski-fact}) in a countable vocabulary. The assumption of categoricity in $\aleph_0$ can be removed if we instead assume categoricity in $\aleph_1$. In this case, we obtain the following upward categoricity transfer:

\textbf{Theorem \ref{categ-upward}.}
\textit{Assume $2^{\aleph_0} < 2^{\aleph_1}$. Let $\psi$ be a universal $\Ll_{\omega_1, \omega}$ sentence. If $\psi$ is categorical in $\aleph_1$, then it is categorical in all uncountable cardinals.}

The statements of Theorems \ref{main-theorem} and \ref{categ-upward} should be compared to the second author's eventual categoricity theorem for universal classes\cite{vaseyf}.

\begin{fact}\label{univ-categ}
  Let $\psi$ be a universal $\Ll_{\omega_1, \omega}$-sentence. If $\psi$ is categorical in \emph{some} $\mu \ge \beth_{\beth_{\omega_1}}$, then $\psi$ is categorical in \emph{all} $\mu' \ge \beth_{\beth_{\omega_1}}$.
\end{fact}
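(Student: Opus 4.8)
The plan is to regard $\K = \Mod(\psi)$ as an abstract elementary class ordered by substructure (Fact \ref{tarski-fact}), so that $\LS(\K) = \aleph_0$ and $\K$ is closed under unions of $\subseteq$-increasing chains and, by Definition \ref{univ-def}, under arbitrary nonempty intersections of submodels of a fixed model. The intersection property is the structural feature I would exploit throughout: every subset of a model generates a canonical smallest submodel, which hands $\K$ prime models over sets and, as a first consequence, forces Galois types to be syntactic. Indeed, two tuples $\ba, \bb$ over a model $M$ realize the same Galois type exactly when there is an isomorphism $\langle M\ba \rangle \cong \langle M\bb \rangle$ fixing $M$, and such an isomorphism is read off from agreement of quantifier-free (indeed $\Ll_{\infty,\omega}$) types. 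Hence $\K$ is fully $(<\aleph_0)$-tame and type-short: distinct Galois types already separate over a finite subdomain. Tameness is the engine that converts the cardinal-by-cardinal arguments below into global statements.

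Next I would extract the structural consequences of high categoricity. Because $\psi \in \Ll_{\omega_1,\omega}$, Morley's analysis gives $\beth_{\omega_1}$ as the Hanf number for existence, so a model of the categoricity cardinal $\mu \ge \beth_{\beth_{\omega_1}}$ already produces arbitrarily large models. Feeding Shelah's presentation theorem and the associated Ehrenfeucht--Mostowski constructions into categoricity in $\mu$ then yields, for the tail $\K_{\ge \beth_{\omega_1}}$, amalgamation, joint embedding, no maximal models, and stability. The outer $\beth$ in the threshold is precisely the room these EM arguments require once the relevant L\"owenheim--Skolem bound of the relativized class has been pushed up to $\beth_{\omega_1}$; the exact bookkeeping is routine given the two layers of Hanf numbers.

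The technical heart is the construction of an independence relation. Using tameness, amalgamation, and stability I would build a good $\lambda$-frame---a forking-like notion with existence, extension, uniqueness, symmetry, and local character---on the models of some size $\lambda$ in the tail, with the prime models over sets making the nonforking relation canonical (this is the ``primes'' hypothesis of the general version). Boney's upward frame transfer then lifts the good $\lambda$-frame to a good $(\geq\lambda)$-frame, since $\K$ is $\lambda$-tame. With a global independence relation and primes in place, the categoricity transfer becomes structural: resolving models as unions of prime models built over nonforking-independent sequences shows $\K$ is categorical in every cardinal above $\lambda$ (the upward direction), while uniqueness of limit models, together with the superstability packaged in the frame, delivers the downward transfer to every $\mu' \ge \beth_{\beth_{\omega_1}}$.

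I expect the decisive obstacle to be the construction and verification of the good frame---establishing symmetry and local character of forking, which amounts to deriving superstability from categoricity, and proving uniqueness of limit models in each relevant cardinal. This is where tameness and amalgamation genuinely interact, and where the failure of the Hart--Shelah examples to be tame shows that no purely combinatorial shortcut is available. By contrast, the two-layer cardinal arithmetic that isolates the exact threshold $\beth_{\beth_{\omega_1}}$ is comparatively routine once the frame and its transfer are in hand.
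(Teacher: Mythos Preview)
The paper does not prove this statement: it is cited as a Fact from \cite{vaseyf} and serves only as a point of comparison with the results proved here. So there is no ``paper's own proof'' to compare against directly. That said, your outline does track the overall architecture of the argument in \cite{vaseyd, vaseyf}: tameness and primes for universal classes, construction of a good frame, upward frame transfer, and a categoricity transfer using primes.

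There is, however, a genuine gap in your sketch. You write that ``Ehrenfeucht--Mostowski constructions into categoricity in $\mu$ then yields, for the tail $\K_{\ge \beth_{\omega_1}}$, amalgamation, joint embedding, no maximal models, and stability.'' EM constructions plus categoricity do give no maximal models and (eventual) stability, but they do \emph{not} give amalgamation. Deriving amalgamation from categoricity without assuming it is one of the principal difficulties in this line of work, and it is not resolved by EM methods. In the actual proof, the logic runs in the opposite direction from what you suggest: one first builds a local good frame \emph{without} assuming amalgamation (using the structure specific to universal classes---in particular primes and the syntactic characterization of Galois types), and then uses primes together with the frame to \emph{derive} amalgamation (this is exactly the content of Fact~\ref{extending-2} in the present paper, taken from \cite{vaseyd}). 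Only after amalgamation is secured does the Boney-style upward transfer of Fact~\ref{extending} apply. So your identification of the frame construction as the ``decisive obstacle'' is correct, but you have understated it: the frame must be built in the absence of amalgamation, and amalgamation is a consequence rather than a preliminary step.
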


Fact \ref{univ-categ} is a ZFC theorem while the results of this paper use $2^{\aleph_0} < 2^{\aleph_1}$. However, Fact \ref{univ-categ} is an eventual statement, valid for ``big'' cardinals (in fact there is a generalization to any universal class, not necessarily in a countable vocabulary), while the focus of this paper is on structural properties holding in $\aleph_0$ and $\aleph_1$.

The reader may wonder: are there any interesting examples of eventually categorical universal classes? After the initial submission of this paper, Hyttinen and Kangas \cite{univ-cat} showed that the answer is no: in any universal class categorical in a high-enough regular cardinal, any big-enough model will eventually look like either a set or a vector space (the methods are geometric in nature and also eventual, hence completely different from the tools used in this paper). Thus a reader wanting a non-trivial example illustrating e.g.\ Theorem \ref{categ-upward} is out of luck: the statement of Theorem \ref{categ-upward} combined with the Hyttinen-Kangas result implies that \emph{any} example will eventually look like a class of vector spaces or a class of sets!\footnote{It is however possible to add some noise in the low cardinals, see Example \ref{morley-example} here.} One can think of this result as saying that an eventual version of Zilber's trichotomy holds for universal classes (but since algebraically closed fields are not universal, it is really a dichotomy).

Nevertheless, we still believe that the theorems of this paper are important for several reasons. First, the fact that there are no nontrivial examples is itself not obvious and Theorem \ref{categ-upward} helps establish it. Second, it has many times been asked whether Morley's categoricity theorem can be applied to any interesting examples, and so far none has been found: the point is that the \emph{methods} used to prove Morley's theorem are important. Similarly, we believe that the methods to prove the theorems here (good frames and tameness) are important to develop a classification theory of AECs -- the statements of Theorems \ref{main-theorem} and \ref{categ-upward} here are showcases for the methods. Third, while Hyttinen and Kangas' proofs seem to only work for universal classes, our result can be generalized\footnote{Similarly, Fact \ref{univ-categ} can be generalized. See for example the recent result of Ackerman, Boney, and the second author on multiuniversal classes \cite{abv-categ-multi-v2}.} as follows:

\textbf{Theorem \ref{general-thm}.}
  Assume $2^{\aleph_0} < 2^{\aleph_1}$. Let $\K$ be an AEC with $\LS (\K) = \aleph_0$. Assume that $\K$ has primes, is $\aleph_0$-tame, and is $\PC_{\aleph_0}$ (see \cite[I.1.4]{shelahaecbook}, this is essentially the class of reducts of models of an $\Ll_{\omega_1,\omega}$-sentence).

  \begin{enumerate}
  \item If $\K$ is categorical in $\aleph_0$ and $1 \le \Ii (\K, \aleph_1) < 2^{\aleph_1}$, then $\K$ has arbitrarily large models and categoricity in \emph{some} uncountable cardinal implies categoricity in \emph{all} uncountable cardinals.
  \item If $\K$ is categorical in $\aleph_1$, then $\K$ is categorical in all uncountable cardinals.
  \end{enumerate}

The hypotheses of Theorem \ref{general-thm} are very general: they encompass for example the class of models of any $\aleph_0$-stable first-order theory (the setup of Morley's theorem) as well as any quasiminimal pregeometry class \cite{quasimin} (see e.g.\ \cite[4.2]{shvas-apal} for why they are $\PC_{\aleph_0}$). There are many such classes (e.g.\ the pseudoexponential fields \cite{zilber-pseudoexp}) which are not sets or vector spaces.

It is worth noting, that the results of this paper are direct consequences of putting together general facts about AECs (many only recently discovered): Shelah's construction of good frames \cite[\S II.3]{shelahaecbook}, Boney's proof of tameness in universal classes, and the second author's proof of the eventual categoricity conjecture in tame AECs with primes \cite{vaseyd, vaseye}. We decided to publish them because it is not completely obvious how to use these tools, and also because we believe that it is worth demonstrating how they can be used to solve such test questions.

Let us outline the proof of Theorem \ref{main-theorem} in more details. We start with $K$, the class of models of our universal $\Ll_{\omega_1, \omega}$ sentence $\psi$. This is a universal class (see Definition \ref{univ-def}). We are further assuming that $\psi$ is categorical in $\aleph_0$ and has one but not too many models in $\aleph_1$. The first step is to show that $K$ is well-behaved in $\aleph_0$: we use machinery of Shelah (Fact \ref{nice-stability-fact}) to build a \emph{good $\aleph_0$-frame}. The second step is to observe that universal classes have a locality property for Galois types called \emph{tameness} (see Definition \ref{tame-def}): in fact Galois types are determined by their finite restrictions (this is due to Will Boney, see Fact \ref{tame-univ}). The third step is the easy observation that in universal classes there is a prime model over every set (see Definition \ref{primet}): take the closure of the set under the functions of an ambient model. The fourth and final step is to use the second author's results on AECs that have a good frame, are tame, and have primes \cite{vaseyd, vaseye}: any such class has arbitrarily large models and further Morley's categoricity theorem holds of such classes.

Note that the above argument only used the structural assumption on the class in the first step (to get the good frame). Once we have a good frame, the result follows because any universal class is tame and has primes. Moreover, the argument to get the good frame works in a much more general setup than universal classes. This is the reason our main theorem can be generalized to Theorem \ref{general-thm}.

  To sum up, any tame AEC with primes which has good behavior in the ``low'' cardinals ($\aleph_0$ and $\aleph_1$) will have good behavior everywhere. If on the other hand it is not clear that the AEC is tame or has primes, Shelah's results \cite{sh87a, sh87b} and the Hart-Shelah example \cite{hash323, untame} tell us that one will need to use higher cardinals (the $\aleph_n$'s) to sort out whether the AEC is well-behaved past $\aleph_\omega$.

This paper was written while the first author was working on a Ph.D.\ under the direction of Rami Grossberg at Carnegie Mellon University and he would like to thank Professor Grossberg for his guidance and assistance in his research in general and in this work in particular. We also would like to thank John Baldwin, Will Boney, Hanif Cheung, and the referees for valuable comments that helped improve the paper.

\section{Preliminaries}

We assume that the reader has some familiarity with the basics of abstract elementary classes, as presented in for example \cite[\S 4-8]{baldwinbook09}. In this section, we recall the main notions that we will use.

The notion of a universal class was studied already in Tarski's \cite{tarski}. Shelah \cite{sh300} was the first to develop classification theory for non-elementary universal classes.

\begin{defin}\label{univ-def}
A class of structures $K$ is a universal class if:
\begin{enumerate}
\item $K$ is a class of $\tau$-structures, for some fixed vocabulary $\tau = \tau(K)$.
\item $K$ is closed under isomorphisms.
\item $K$ is closed under $\subseteq$-increasing chains.
\item If $M \in K$ and $N \subseteq M$, then $N \in K$.  
\end{enumerate}
\end{defin}

The following basic characterization of universal classes is essentially due to Tarski \cite{tarski} (he proved it for finite vocabulary, but the proof generalizes). This will not be used in the present paper.

\begin{fact}[Tarski's presentation theorem]\label{tarski-fact}
  Let $K$ be a class of structures. The following are equivalent:

  \begin{enumerate}
  \item\label{tarski-1} $K$ is a universal class.
  \item\label{tarski-2} $K$ is the class of models of a universal $\Ll_{\infty, \omega}$ theory.
  \end{enumerate}
\end{fact}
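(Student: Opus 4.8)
The plan is to prove the two implications separately, the first being routine and the second requiring one genuine lemma.

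\textbf{Direction \eqref{tarski-2} $\Rightarrow$ \eqref{tarski-1}.} Suppose $K = \Mod(T)$ for a set $T$ of universal $\Ll_{\infty,\omega}$-sentences, i.e.\ sentences of the form $\forall x_1\cdots x_n\,\phi(\bar x)$ with $\phi$ quantifier-free. The key observation is that the truth value of a quantifier-free $\Ll_{\infty,\omega}$-formula $\phi(\bar x)$ on a tuple $\bar a$ depends only on the substructure generated by $\bar a$, and is computed the same way in any structure containing that substructure. From this, closure of $\Mod(T)$ under isomorphisms is immediate; closure under substructures follows because a counterexample tuple to $\forall\bar x\,\phi$ in a substructure $N\subseteq M$ would also be one in $M$; and closure under $\subseteq$-increasing chains follows because any finite tuple from $\bigcup_i M_i$ already lies in some single $M_i$, where $\phi$ already holds of it. Hence $\Mod(T)$ is a universal class.

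\textbf{Direction \eqref{tarski-1} $\Rightarrow$ \eqref{tarski-2}.} Given a universal class $K$ in vocabulary $\tau = \tau(K)$, for each $n < \omega$ let $P_n$ be the set of quantifier-free atomic types $\{\theta(\bar x) : \theta \text{ atomic},\ A \models \theta(\bar a)\}$ ranging over $A \in K$ and $\bar a \in A^n$; since there are at most $|\tau| + \aleph_0$ atomic $\tau$-formulas in the fixed variables $x_1, \dots, x_n$, each $P_n$ is a set. For $p \in P_n$ let $\theta_p(\bar x)$ be the conjunction of all atomic formulas in $p$ together with the negations of all atomic formulas not in $p$ (a quantifier-free $\Ll_{\infty,\omega}$-formula), and set $\sigma_n := \forall \bar x\, \bigvee_{p \in P_n} \theta_p(\bar x)$, a universal $\Ll_{\infty,\omega}$-sentence. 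Let $T := \{\sigma_n : n < \omega\}$. Then $K \subseteq \Mod(T)$: any finite tuple of any $M \in K$ realizes (by definition) a type in the appropriate $P_n$, so $M \models \sigma_n$.

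It remains to show $\Mod(T) \subseteq K$, and this is where the real work lies. If $N \models T$, then every finite tuple $\bar a$ of $N$ realizes some $p \in \bigcup_n P_n$, so the substructure of $N$ generated by $\bar a$ is isomorphic to the substructure generated by a realization of $p$ inside some member of $K$; by closure of $K$ under substructures and under isomorphisms, this substructure lies in $K$. Thus \emph{every finitely generated substructure of $N$ is in $K$}, and $N$ is the union of the directed system of its finitely generated substructures. So it suffices to prove the lemma that a universal class is closed under directed unions of substructures. I would prove this by induction on the cardinality of the index set $I$ of the directed system $\langle B_i : i \in I\rangle$ with union $B$: if $I$ is finite it has a maximum and $B = B_{\max} \in K$; if $I$ is countable it contains a cofinal $\omega$-chain, so $B$ is the union of a $\subseteq$-increasing $\omega$-chain of members of $K$ and lies in $K$; and if $I$ is uncountable, write $I$ as a continuous $\subseteq$-increasing union of directed subsets of strictly smaller cardinality (take an arbitrary continuous filtration of $I$ into small pieces and close each piece off under finite upper bounds) and combine the induction hypothesis with closure under increasing chains. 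I expect this directed-union lemma — and in particular arranging the bookkeeping so that it goes through regardless of the size of $\tau$, which is why one cannot simply induct on $|N|$ and reduce to small substructures — to be the only nontrivial point; the rest of the argument is a direct unwinding of definitions. (Since the result is classical and not needed in the sequel, I would present it in this streamlined form.)
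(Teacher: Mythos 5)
Your proof is correct, but note that the paper itself does not prove this fact at all: it only attributes it to Tarski (who proved the finite-vocabulary, first-order case) with the remark that the proof generalizes, and it is never used in the sequel. So your argument is a genuine self-contained proof rather than a variant of one in the paper. What you do is the standard modern argument: the easy direction is preservation of universal sentences under substructures, isomorphisms, and unions of chains; for the converse you axiomatize $K$ by the sentences $\sigma_n$ saying that every $n$-tuple realizes one of the complete quantifier-free (atomic) types realized in $K$, observe that such a type determines the generated substructure up to isomorphism (since it records equalities between terms and all relations among them), conclude that every finitely generated substructure of a model of $T$ lies in $K$, and finish with the lemma that closure under unions of $\subseteq$-increasing chains implies closure under directed unions. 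That last lemma is exactly Iwamura's lemma in disguise, and your induction on $|I|$ is its standard proof; the only points to polish are (i) the uncountable case should be run as a recursion of length $|I|$ (or $\operatorname{cf}|I|$ when $|I|$ is singular) building an increasing family of directed subsets of size $<|I|$ directly, since closing an arbitrary continuous filtration under chosen upper bounds need not stay continuous --- though continuity is not actually needed, only increasingness and that the union is all of $I$; and (ii) the degenerate cases ($n=0$, empty $K$, conventions about empty structures) are harmless but worth a word. Your remark explaining why one cannot simply induct on $\|N\|$ when the vocabulary is large is exactly the right reason the directed-union formulation is needed, and your construction correctly yields a universal $\Ll_{\infty,\omega}$ theory of size controlled by $2^{|\tau|+\aleph_0}$, which is the generalization the paper alludes to.
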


In this paper we will use tools of the more general framework of abstract elementary classes:

\begin{defin}[Definition 1.2 in \cite{sh88}]\label{aec-def}
  An \emph{abstract elementary class} (AEC for short) is a pair $\K = (K, \lea)$, where:

  \begin{enumerate}
    \item $K$ is a class of $\tau$-structures, for some fixed vocabulary $\tau = \tau (\K)$. 
    \item $\lea$ is a partial order (that is, a reflexive and transitive relation) on $K$. 
    \item $(K, \lea)$ respects isomorphisms: If $M \lea N$ are in $K$ and $f: N \cong N'$, then $f[M] \lea N'$. In particular (taking $M = N$), $K$ is closed under isomorphisms.
    \item If $M \lea N$, then $M \subseteq N$. 
    \item Coherence: If $M_0, M_1, M_2 \in K$ satisfy $M_0 \lea M_2$, $M_1 \lea M_2$, and $M_0 \subseteq M_1$, then $M_0 \lea M_1$;
    \item Tarski-Vaught axioms: Suppose $\delta$ is a limit ordinal and $\seq{M_i \in K : i < \delta}$ is an increasing chain. Then:

        \begin{enumerate}

            \item $M_\delta := \bigcup_{i < \delta} M_i \in K$ and $M_0 \lea M_\delta$.
            \item\label{smoothness-axiom}Smoothness: If there is some $N \in K$ so that for all $i < \delta$ we have $M_i \lea N$, then we also have $M_\delta \lea N$.

        \end{enumerate}

    \item Löwenheim-Skolem-Tarski axiom: There exists a cardinal $\lambda \ge |\tau(\K)| + \aleph_0$ such that for any $M \in K$ and $A \subseteq |M|$, there is some $M_0 \lea M$ such that $A \subseteq |M_0|$ and $\|M_0\| \le |A| + \lambda$. We write $\LS (\K)$ for the minimal such cardinal.
  \end{enumerate}
\end{defin}
\begin{remark} \
  \begin{enumerate}
    \item When we write $M \lea N$, it is assumed that $M, N \in K$.
    \item We write $\K$ for the pair $(K, \lea)$, and $K$ (no boldface) for the actual class. However we may abuse notation and write for example $M \in \K$ instead of $M \in K$ when there is no danger of confusion.
\item Given $[\lambda, \mu)$ an interval of cardinals (we allow $\mu = \infty$), let $\K_{[\lambda, \mu)} = \{  M \in \K : \| M\| \in [\lambda, \mu) \}$. We write $\K_\lambda$ for $\K_{\{\lambda \}}$ and $\K_{\ge \lambda}$ for $\K_{[\lambda, \infty)}$.
    \item If $K$ is a universal class, then $\K := (K, \subseteq)$ is an AEC with $\LS (\K) = |\tau (K)| + \aleph_0$. Throughout this paper, we think of $K$ as the AEC $\K$ and may write ``$\K$ is a universal class'' instead of ``$K$ is a universal class''.
  \end{enumerate}
\end{remark}

In any AEC $\K$, we can define a semantic notion of type, called Galois or orbital type in the literature (such types were introduced by Shelah in \cite{sh300} but we use the definition from \cite[2.16]{sv-infinitary-stability-afml}).

\begin{defin}\label{gtp-def}
  Let $\K$ be an AEC.
  
  \begin{enumerate}
    \item Let $\K^3$ be the set of triples of the form $(\bb, A, N)$, where $N \in \K$, $A \subseteq |N|$, and $\bb$ is a sequence of elements from $N$. 
    \item For $(\bb_1, A_1, N_1), (\bb_2, A_2, N_2) \in \K^3$, we say $(\bb_1, A_1, N_1)E_{\text{at}} (\bb_2, A_2, N_2)$ if $A := A_1 = A_2$, and there exists $f_\ell : N_\ell \xrightarrow[A]{} N$ such that $f_1 (\bb_1) = f_2 (\bb_2)$.
    \item Note that $E_{\text{at}}$ is a symmetric and reflexive relation on $\K^3$. We let $E$ be the transitive closure of $E_{\text{at}}$.
    \item For $(\bb, A, N) \in \K^3$, let $\gtp_{\K} (\bb / A; N) := [(\bb, A, N)]_E$. We call such an equivalence class a \emph{Galois type} (or just a \emph{type}). Usually, $\K$ will be clear from context and we will omit it.
  \end{enumerate}
\end{defin}

Note that Galois types are defined as the finest notion of type respecting $\K$-embeddings. When $\K$ is an elementary class, $\gtp (\bb / A; M)$ contains the same information as the usual notion of $\Ll_{\omega, \omega}$-syntactic type, but in general the two notions need not coincide \cite{hash323, untame}. We will see shortly (Fact \ref{tame-univ}) that in universal classes the Galois types coincide with the quantifier-free types.

The \emph{length} of $\gtp (\bb / A; M)$ is the length of $\bb$. For $M \in \K$ and $\alpha$ a cardinal, $p$ is a type over $M$ of length $\alpha$ if there is $N \gea M$ and $\bb \in N^\alpha$ such that $p = \gtp(\bb /M, N)$. We write $\gS_{\K}^\alpha (M) = \gS^\alpha (M) = \{\gtp (\bb / M; N) : \bb \in \fct{\alpha}{N}, M \lea N\}$ for the set of types over $M$ of length $\alpha$. When $\alpha = 1$, we just write $\gS (M)$. We define naturally what it means for a type to be realized inside a model, to extend another type, and to take the image of a type by a $\K$-embedding. We call an AEC $\K$ \emph{$\lambda$-stable} if $|\gS (M)| \le \lambda$ for every $M \in \K$ of cardinality $\lambda$.

The notion of a good $\lambda$-frame is introduced in \cite[\S II.2]{shelahaecbook}. As an approximation, the reader can think of the statement ``$\K$ has a good $\lambda$-frame'' as saying ``$\K$ has a model of cardinality $\lambda$, amalgamation in $\lambda$, no maximal models in $\lambda$, joint embedding in $\lambda$, is stable in $\lambda$, and has a superstable-like nonforking notion for types over models of cardinality $\lambda$'' (for a full definition see \cite[3.8]{bova})\footnote{In this paper our frames will always be \emph{type-full}.}.

Tameness is a locality property of Galois types (which may or may not hold), first named by Grossberg and VanDieren in \cite{tamenessone}:

\begin{defin}\label{tame-def}
We say an AEC $\K$ is \emph{$(< \kappa)$-tame} if for any $M \in \K$ and $p \neq q \in \gS(M)$,  there is $A \subseteq |M|$ such that $|A| < \kappa$ and $p\upharpoonright A \neq q\upharpoonright A$. By \emph{$\kappa$-tame} we mean \emph{$(<\kappa^+)$-tame}. If we write \emph{$(< \kappa, \lambda)$-tame} we restrict to $M \in \K_{\lambda}$. We may also talk of tameness for types of \emph{finite length}, which means that we allow $p, q$ above to be in $\gS^{<\omega} (M)$ rather than just in $\gS (M)$ (i.e. they could be types of finite sequences rather than types of singletons).
\end{defin}

The following important fact is due to Will Boney. It appears in print as \cite[3.7]{vaseyd}.

\begin{fact}\label{tame-univ}
If $\K$ is a universal class, then $\K$ is $(< \aleph_{0})$-tame for types of finite length (in fact for types of all lengths). Moreover, Galois types are the same as quantifier-free types.
\end{fact}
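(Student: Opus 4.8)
\emph{Proof idea.} The plan is to first establish the ``moreover'' clause, in the slightly stronger form that for any set $A$ and any $(\bb_1, A, N_1), (\bb_2, A, N_2) \in \K^3$ one has $\gtp_\K(\bb_1/A; N_1) = \gtp_\K(\bb_2/A; N_2)$ if and only if $\bb_1$ and $\bb_2$ realize the same quantifier-free type over $A$; $(<\aleph_0)$-tameness will then fall out immediately. For the ``only if'' (easy) direction, recall that in a universal class $\lea\, = \,\subseteq$, so every $\K$-embedding is an isomorphism onto a substructure and hence preserves and reflects quantifier-free formulas. Thus if $(\bb_1, A, N_1)\,E_{\mathrm{at}}\,(\bb_2, A, N_2)$, witnessed by $f_\ell : N_\ell \xrightarrow[A]{} N$ with $f_1(\bb_1) = f_2(\bb_2)$, then $\bb_1$ and $\bb_2$ have the same quantifier-free type over $A$ (computed in $N$, hence — again by reflection along the $f_\ell$ — in $N_1$ and $N_2$); since sameness of quantifier-free type is an equivalence relation containing $E_{\mathrm{at}}$, it contains the transitive closure $E$, which finishes this direction.

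For the ``if'' direction — the heart of the matter — suppose $\bb_1$ from $N_1$ and $\bb_2$ from $N_2$ realize the same quantifier-free type over $A$. Let $N_\ell^0$ be the substructure of $N_\ell$ generated by $A$ together with the entries of $\bb_\ell$. Since $\K$ is closed under substructures, $N_\ell^0 \in \K$, and since $\lea\, = \,\subseteq$ we have $N_\ell^0 \lea N_\ell$, so $\gtp_\K(\bb_\ell/A; N_\ell) = \gtp_\K(\bb_\ell/A; N_\ell^0)$ (the inclusion $N_\ell^0 \hookrightarrow N_\ell$ is a $\K$-embedding fixing $A$ pointwise). Now invoke the standard fact that a quantifier-free type over $A$ pins down the isomorphism type over $A$ of the structure it generates: the assignment $t^{N_1^0}(\ba, \bb_1) \mapsto t^{N_2^0}(\ba, \bb_2)$, as $t$ ranges over $\tau$-terms and $\ba$ over finite tuples from $A$, is well-defined and injective because the relevant equalities are quantifier-free formulas lying in the common type, surjective by symmetry, and a $\tau$-isomorphism by the same argument applied to the basic relations and operations. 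This yields an isomorphism $g : N_1^0 \cong N_2^0$ fixing $A$ pointwise with $g(\bb_1) = \bb_2$; taking $N := N_2^0$, $f_1 := g$, $f_2 := \id_{N_2^0}$ shows $(\bb_1, A, N_1^0)\,E_{\mathrm{at}}\,(\bb_2, A, N_2^0)$, whence $\gtp_\K(\bb_1/A; N_1) = \gtp_\K(\bb_1/A; N_1^0) = \gtp_\K(\bb_2/A; N_2^0) = \gtp_\K(\bb_2/A; N_2)$, as required.

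Tameness is now immediate. If $p \neq q$ are Galois types over some $M \in \K$, of finite or of arbitrary length, then by the characterization their quantifier-free types over $|M|$ differ, so there is a single quantifier-free formula $\phi(\bx, \by)$ and a finite tuple $\bar m$ from $M$ such that $\phi(\bx, \bar m)$ belongs to exactly one of the two quantifier-free types (here $\bx$ has finite length, since quantifier-free formulas have only finitely many free variables). Putting $A := \operatorname{ran}(\bar m) \subseteq |M|$ we get $|A| < \aleph_0$ and, applying the characterization over $A$, $p \rest A \neq q \rest A$. Hence $\K$ is $(<\aleph_0)$-tame, for types of all lengths.

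The only delicate point is the passage to the generated substructures $N_\ell^0$, and this is precisely where the two defining features of a universal class enter: closure under substructures (so that $N_\ell^0 \in \K$) and the coincidence of $\K$-submodel with substructure (so that Galois types can be read off inside $N_\ell^0$). Everything else — the universal-algebra computation that a quantifier-free type determines the generated structure, and the bookkeeping with $E_{\mathrm{at}}$ — is routine.
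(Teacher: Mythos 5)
Your proof is correct, and it is essentially the standard argument: the paper itself does not prove Fact \ref{tame-univ} but cites it (Boney, as written up in [Vas17c, 3.7]), where the proof is exactly this reduction of Galois types to quantifier-free types via the substructures generated by $A$ together with the realizing tuples, using closure under substructures and $\lea\,=\,\subseteq$. The deduction of $(<\aleph_0)$-tameness from the characterization, via a single distinguishing quantifier-free formula with finitely many parameters, is also the intended one.
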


The final main concept use in this paper is that of prime models (here over sets of the form $M \cup \{a\}$). The appropriate definition was introduced to AECs by Shelah in \cite[III.3.2]{shelahaecbook}. The definition is what the reader would expect when working inside a fixed monster model, but here we may not have amalgamation, so we have to use Galois types to describe the embedding of the base set.

\begin{defin}\label{primet} Let $\K$ be an AEC.

\begin{itemize}
\item A \emph{prime triple} is $(a,  M, N)$ such that $M \lea N$, $a \in |N|$ and for every $N' \in \K$ and $a' \in |N'|$ such that $\gtp(a/M, N)= \gtp(a'/ M, N')$, there exists $f: N \xrightarrow[M]{} N'$ so that $f(a)=a'$.
\item We say that $\K$ \emph{has primes} if for any $M \in \K$ and every $p \in \gS(M)$, there is a prime triple $(a ,M , N)$ such that $p = \gtp(a/M, N)$.
\end{itemize}
\end{defin}

By taking the closure of $M \cup \{a\}$ under the functions of an ambient model, we obtain \cite[5.3]{vaseyd}:

\begin{fact}\label{prime-univ}
If $\K$ is a universal class, then $\K$ has primes.
\end{fact}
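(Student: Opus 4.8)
The plan is to realize $p$ inside an ambient model and then cut down to the substructure generated by the base together with the realizing element. Concretely, pick $N \gea M$ and $a \in |N|$ with $p = \gtp(a/M; N)$, and let $N_0$ be the substructure of $N$ generated by $|M| \cup \{a\}$ under the functions and constants of $N$. First I would observe that $N_0 \in K$: this is exactly closure of $K$ under substructures (axiom~(4) of Definition~\ref{univ-def}). Since $\lea$ is $\subseteq$, we get $M \lea N_0 \lea N$; and since $a \in |N_0|$ and the inclusion $N_0 \hookrightarrow N$ fixes $|M|$ pointwise, the triples $(a, |M|, N_0)$ and $(a, |M|, N)$ are $E_{\text{at}}$-related (take the common model to be $N$, with $\id_N$ and the inclusion), so $\gtp(a/M; N_0) = \gtp(a/M; N) = p$. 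Thus $(a, M, N_0)$ is a candidate prime triple realizing $p$, and it remains only to check the universal property.

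So suppose $N' \in K$ and $a' \in |N'|$ with $\gtp(a'/M; N') = p = \gtp(a/M; N_0)$. By Fact~\ref{tame-univ}, Galois types in a universal class coincide with quantifier-free syntactic types, so $a$ and $a'$ realize the same quantifier-free type over $|M|$. I would then define $f : N_0 \to N'$ by $f\bigl(t^{N_0}(a, \bar m)\bigr) := t^{N'}(a', \bar m)$ for each $\tau$-term $t$ and tuple $\bar m$ from $|M|$ of the appropriate length; since $N_0$ is generated by $|M| \cup \{a\}$, every element of $N_0$ has this form. Equality of quantifier-free types over $|M|$ makes $f$ well-defined and injective (an equation or inequation between two such terms holds in $N_0$ iff it holds in $N'$), a $\tau$-homomorphism that preserves and reflects relation symbols, the identity on $|M|$ (take $t$ the variable ranging over the relevant coordinate of $\bar m$), and a map sending $a$ to $a'$ (take $t$ the distinguished variable). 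Hence $f$ is a $\tau$-embedding; as $f[N_0] \subseteq N'$ is then a substructure of $N'$ and $\lea$ is $\subseteq$, the map $f : N_0 \xrightarrow[M]{} N'$ is a $\K$-embedding with $f(a) = a'$, which is precisely what the definition of a prime triple demands.

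I do not expect a serious obstacle: the real content is imported from Fact~\ref{tame-univ} (Boney's identification of Galois types with quantifier-free types in universal classes), together with the two trivial but essential points that the term-generated substructure stays in $K$ and that $\K$-embeddings between members of a universal class are just injective $\tau$-homomorphisms onto substructures (because $\lea$ is literal substructure). The only things to be slightly careful about are (i) passing to $N_0$ rather than $N$, which is needed so that the domain of $f$ is exactly the set of terms in $a$ and $|M|$, and (ii) the degenerate case in which $\tau$ has no function or constant symbols, where $N_0$ is simply $|M| \cup \{a\}$ and the same argument goes through verbatim.
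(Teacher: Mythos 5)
Your proposal is correct and is exactly the argument the paper has in mind: the paper's one-line justification (``take the closure of $M \cup \{a\}$ under the functions of an ambient model'', citing \cite[5.3]{vaseyd}) is precisely your construction of $N_0$, with the universal property verified via the identification of Galois types with quantifier-free types from Fact \ref{tame-univ}. Nothing further is needed.
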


The past two facts show that universal classes are tame and have primes. The next facts show that if we have a good frame in addition to that, then the structure of the frame transfers upward and in fact categoricity can be transferred.

We first give an approximation, due to Boney and the second author \cite[6.9]{bova}, which assumes amalgamation instead of primes (an earlier result is \cite[1.1]{extendingframes}, which assumes tameness for types of length two instead of just length one).

\begin{fact}\label{extending}
  Let $\K$ be an AEC and let $\lambda \ge \LS (\K)$. If $\K$ is $\lambda$-tame, $\K$ has amalgamation and $\K$ has a type-full good $\lambda$-frame, then $\K$ has a type-full good $[\lambda,\infty)$-frame. 
\end{fact}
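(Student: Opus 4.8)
The plan is to extend the given type-full good $\lambda$-frame $\s$ from $\K_\lambda$ to $(\K_{\ge\lambda}, \lea)$, using $\lambda$-tameness to reduce every assertion about a type over a large model to the conjunction of the corresponding assertions about its restrictions to $\lambda$-sized submodels. Concretely, for $M_0 \lea M$ in $\K_{\ge\lambda}$ and $p \in \gS(M)$, I would declare that \emph{$p$ does not fork over $M_0$} if there is $N_0 \lea M_0$ with $\|N_0\| = \lambda$ such that for every $N \lea M$ with $N_0 \lea N$ and $\|N\| = \lambda$, the restriction $p \upharpoonright N$ does not $\s$-fork over $N_0$ (when $\|M_0\| = \lambda$ one takes $N_0 = M_0$). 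Almost all of the work is then in checking that this relation, together with the nonalgebraic types as the basic types, satisfies the good frame axioms.

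Before the axiom-checking I would record the structural facts that must hold in each cardinal $\ge \lambda$: amalgamation is assumed outright; joint embedding and no maximal models transfer upward from $\lambda$ (where they hold since $\s$ is good) by the usual resolution-and-amalgamation chain arguments; and stability in every $\mu \ge \lambda$ follows because a good $\lambda$-frame is in particular superstable, and superstability together with $\lambda$-tameness and amalgamation gives stability everywhere above $\lambda$. Stability is needed for the local character and extension arguments below.

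The ``soft'' axioms go quickly: invariance, monotonicity and base monotonicity, and the fact that algebraic types fork over everything all descend from the corresponding properties of $\s$; continuity holds because a $\lambda$-sized submodel of the union of a chain already lies inside one link; transitivity follows from transitivity in $\s$ after passing to compatible $\lambda$-sized witnesses; and well-definedness of the relation (independence of the witness $N_0$) follows from base monotonicity and transitivity in $\s$ once two candidate witnesses are enlarged to a common $\lambda$-sized submodel of $M_0$. Uniqueness is where tameness enters: if $p, q \in \gS(M)$ both do not fork over $M_0$, share a $\lambda$-sized witness $N_0 \lea M_0$, and satisfy $p \upharpoonright M_0 = q \upharpoonright M_0$, then for each $\lambda$-sized $N$ with $N_0 \lea N \lea M$ the types $p \upharpoonright N$ and $q \upharpoonright N$ are $\s$-nonforking extensions over $N_0$ of one and the same type, hence equal by uniqueness in $\s$; since this holds for all such $N$, $\lambda$-tameness yields $p = q$.

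The hard part will be the extension property: given $p \in \gS(M_0)$ and $M_0 \lea M$, one must produce $q \in \gS(M)$ extending $p$ and not forking over $M_0$. I would first treat the case $\|M_0\| = \lambda$, resolving the extension $M_0 \lea M$ as a $\lea$-increasing continuous chain of $\lambda$-sized models and building by induction along it a coherent chain of $\s$-nonforking extensions $p_i$ of $p$ (using $\s$-extension and $\s$-uniqueness at successors and continuity at limits), each realized in a witnessing model, the witnessing models being assembled into a directed system via amalgamation so that their direct limit carries $q = \bigcup_i p_i$; the general case then follows after locating a $\lambda$-sized submodel of $M_0$ over which $p$ is based. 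This direct-limit construction, and the bookkeeping needed to keep the embeddings compatible, is the most delicate point and is precisely where amalgamation is indispensable. Local character for the extended frame is then obtained from local character of $\s$ plus stability (a counting argument shows that along a long enough chain a type is already nonforking over a bounded initial segment), and symmetry is transported from $\s$ by the same restrict-to-$\lambda$-sized-models device. Collecting these verifications shows that $(\K_{\ge\lambda}, \lea)$ equipped with this nonforking relation is a type-full good $[\lambda, \infty)$-frame.
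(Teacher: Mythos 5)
Your definition of the extended relation (nonforking over $M_0$ witnessed by a $\lambda$-sized $N_0 \lea M_0$ such that all $\lambda$-sized restrictions are $\s$-nonforking over $N_0$) is exactly the standard one, and your treatment of invariance, monotonicity, transitivity, uniqueness via tameness, and the directed-system/coherence proof of the extension property follows the same route as the actual proof in the literature (this Fact is not proved in the paper; it is quoted from Boney--Vasey \cite[6.9]{bova}, building on Boney \cite[1.1]{extendingframes}). However, there is a genuine gap at the step you dismiss in half a sentence: symmetry. Symmetry is an assertion about two elements $a$ and $b$, and the ``restrict-to-$\lambda$-sized-models'' device for transferring it requires knowing that if all suitable $\lambda$-sized restrictions of the Galois types of the \emph{pairs} agree, then the full types of the pairs agree --- that is, $\lambda$-tameness for types of length two. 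The hypothesis of Fact \ref{extending} is only $\lambda$-tameness in the sense of Definition \ref{tame-def}, i.e.\ for types of length one, and the paper explicitly flags the distinction: Boney's earlier theorem \cite[1.1]{extendingframes} assumed length-two tameness precisely because of this step, and the cited result \cite[6.9]{bova} is the strengthening to length-one tameness, which is obtained not by a restriction argument but by a genuinely different one (deriving symmetry of the transferred relation from the canonicity/order-property machinery for independence relations). As written, your argument proves at best the older, weaker statement, not the one asserted.

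Two secondary points, which are fixable by standard arguments but are not correct as stated: continuity does not follow from ``a $\lambda$-sized submodel of the union of a chain already lies inside one link,'' since that is only true when the chain has cofinality greater than $\lambda$; and local character cannot be obtained from a stability counting argument ``along a long enough chain,'' because the good frame axiom demands local character for chains of \emph{every} limit length, including cofinality $\omega$, which no counting argument reaches. The correct route for both is to run short (e.g.\ $\omega$-length) chains of $\lambda$-sized submodels inside the given models and invoke the local character and monotonicity of $\s$ together with transitivity; the counting-free chain argument also yields that every type over a large model is based on a $\lambda$-sized submodel, which your extension and local character steps implicitly use. The essential missing idea, though, remains the symmetry transfer from length-one tameness.
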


The second author showed that one could replace amalgamation by primes (in fact a weak version of amalgamation suffices) \cite[4.16]{vaseyd}:

\begin{fact}\label{extending-2}
  Let $\K$ be an AEC and let $\lambda \ge \LS (\K)$. If $\K$ is $\lambda$-tame, has primes, and $\K$ has a type-full good $\lambda$-frame, then $\K_{\ge \lambda}$ has amalgamation. Hence a type-full good $[\lambda, \infty)$-frame by Fact \ref{extending}.
\end{fact}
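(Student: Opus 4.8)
The plan is to reduce everything to establishing amalgamation in $\K_{\ge \lambda}$: once that is known, $\K$ is $\lambda$-tame, has amalgamation, and carries a type-full good $\lambda$-frame, so Fact~\ref{extending} immediately produces the desired type-full good $[\lambda, \infty)$-frame. To get amalgamation in $\K_{\ge \lambda}$, I would argue by induction on the cardinal $\mu \ge \lambda$ that $\K_{[\lambda, \mu]}$ has amalgamation (equivalently, building up along the way a good-frame-like nonforking notion on $\K_{[\lambda,\mu]}$). The base case $\mu = \lambda$ is part of the hypothesis that $\K$ has a good $\lambda$-frame. At a limit $\mu$, the inductive hypothesis gives amalgamation below $\mu$ and hence, by $\lambda$-tameness and the argument behind Fact~\ref{extending} applied to the interval $[\lambda, \mu)$, a good-frame-like nonforking notion on models of size $< \mu$ whose direct limit handles size $\mu$; smoothness of the AEC then upgrades this to amalgamation at $\mu$ itself.

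The heart of the argument is the successor step: assuming amalgamation (and the attendant nonforking machinery) on $\K_{[\lambda, \mu]}$, deduce amalgamation on $\K_{\mu^+}$. This is exactly where primes are used. Given $M \in \K_{[\lambda,\mu]}$ and $p \in \gS(M)$, primes supply a prime triple $(a, M, N)$ realizing $p$; iterating this along an increasing continuous chain and arranging that every type over every intermediate model gets realized, one obtains over any $M_0 \in \K_\mu$ a ``limit'' extension $M^\ast \gea M_0$ that is $\|M^\ast\|$-saturated. Using uniqueness of nonforking extensions together with $\lambda$-tameness, such limit models over a fixed base are unique up to isomorphism over that base; from this one reads off that $\mu^+$-saturated models of size $\mu^+$ are amalgamation bases. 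To conclude that \emph{every} model of size $\mu^+$ is an amalgamation base, resolve an arbitrary $M \in \K_{\mu^+}$ as a $\lea$-increasing continuous chain $\langle M_i : i < \mu^+ \rangle$ with $M_i \in \K_\mu$, and given $N_1, N_2 \gea M$ build compatible resolutions $N_1 = \bigcup_i N_1^i$, $N_2 = \bigcup_i N_2^i$ and amalgamate level by level over $M_i$: the good $[\lambda,\mu]$-frame and the uniqueness of prime extensions let the partial amalgams cohere and always be continued at successor stages, while smoothness takes care of limit stages.

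The main obstacle is precisely this successor step, and within it the passage from ``saturated models are amalgamation bases'' to ``all models are amalgamation bases'' --- i.e. showing that amalgamation propagates past a single cardinal. In a general AEC this fails; it is what the Hart--Shelah examples obstruct, and it is where the combination of $\lambda$-tameness (to control nonforking extensions across cardinals) and primes (to build and recognize canonical, hence coherent, extensions) is indispensable. A subsidiary but essential point is to set the induction up so as to avoid circularity: all the nonforking and frame machinery invoked while producing amalgamation at $\mu^+$ must live only at cardinals $\le \mu$, so that amalgamation at $\mu^+$ is an output of the step and never an input to it.
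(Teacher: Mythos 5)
You should first note that this statement is quoted in the paper as a black box: the paper gives no proof and cites it from [Vas17c, 4.16]. The proof there is organized quite differently from yours. Primes are used only to get \emph{weak amalgamation}: if $\gtp(a_1/M;N_1)=\gtp(a_2/M;N_2)$, then the prime model over $M\cup\{a_1\}$ embeds into $N_1$ over $M$ sending its distinguished element to $a_1$, and also into $N_2$ sending it to $a_2$, so a small initial segment of the amalgam always exists. Separately, $\lambda$-tameness and the good $\lambda$-frame are used to push the \emph{extension property for Galois types} up to models of arbitrary size $\ge\lambda$ (an induction on cardinals that never presupposes amalgamation above $\lambda$). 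Amalgamation of $N_1,N_2$ over $M$ is then obtained by building a \emph{single} embedding of $N_1$ into a growing chain of extensions of $N_2$ over $M$, one element at a time: realize the type of the next element of $N_1$ over the image (extension property), then use primeness to extend the map; unions are handled by smoothness. Because only one map is being extended, no coherence between independently chosen amalgams is ever needed.

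Measured against this, your sketch has genuine gaps at exactly the points you flag as the ``heart'' of the argument. First, the step from uniqueness of limit models to ``$\mu^+$-saturated models of size $\mu^+$ are amalgamation bases'' is unjustified: the uniqueness you need is an isomorphism \emph{fixing a base of full cardinality} $\mu^+$, which is essentially the amalgamation statement itself; the known uniqueness theorems for limit/saturated models are proved over smaller bases and, moreover, under an amalgamation hypothesis, so invoking them here is circular. (Also, building a saturated extension of size $\mu^+$ over $M_0\in\K_\mu$ needs stability in $\mu$, which you assert but do not derive.) Second, your final reduction --- resolving $M\in\K_{\mu^+}$ and amalgamating $N_1^i,N_2^i$ over $M_i$ ``level by level'' --- runs into the classical directed-system coherence problem: after extending the two partial amalgams to stage $i+1$ there is no reason the resulting embeddings agree on $M_{i+1}$, and ``uniqueness of prime extensions'' is not available (the definition of having primes gives existence of an embedding of the prime model, not canonicity or uniqueness that would force the partial amalgams to cohere). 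This coherence issue is precisely what the one-element-at-a-time construction in the cited proof is designed to avoid, and without an argument replacing it your successor step does not go through.
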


Finally, the second author used Fact \ref{extending-2} together with the orthogonality calculus of good frames to prove the following categoricity transfer \cite[2.8]{vaseye}:

\begin{fact}\label{categ-fact}
  Let $\K$ be an AEC and let $\lambda \ge \LS (\K)$. Assume that $\K$ is $\lambda$-tame, has primes, is categorical in $\lambda$, and $\K$ has a type-full good $\lambda$-frame. If $\K$ is categorical in \emph{some} $\mu > \lambda$, then $\K$ is categorical in \emph{all} $\mu' > \lambda$.
\end{fact}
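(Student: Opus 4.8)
The plan is to push the good $\lambda$-frame up to all cardinals $\ge \lambda$ and then run an orthogonality/unidimensionality argument. First I would invoke Fact~\ref{extending-2} (which in turn invokes Fact~\ref{extending}): since $\K$ is $\lambda$-tame, has primes, and carries a type-full good $\lambda$-frame, $\K_{\ge \lambda}$ has amalgamation and admits a type-full good $[\lambda, \infty)$-frame $\s$. In particular $\K_{\ge \lambda}$ has joint embedding, has no maximal models (hence arbitrarily large models), is stable in every $\mu \ge \lambda$, and has a global nonforking notion over models satisfying local character, symmetry, uniqueness, and continuity. From stability in each $\mu' \in [\lambda, \mu)$ and a union-of-chains construction, the unique model of cardinality $\mu$ is saturated over every smaller submodel; and from the good frame together with amalgamation, saturated models of a fixed cardinality $\ge \lambda^+$ are unique. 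So it suffices to show that for every $\mu' > \lambda$, every model in $\K_{\mu'}$ is saturated.

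Next I would develop the orthogonality calculus inside $\s$ and work with the dichotomy ``$\s$ is unidimensional or not.'' The target is the chain: categoricity in some $\mu > \lambda$ forces $\s$ to be unidimensional, and unidimensionality of $\s$ forces every model in $\K_{\mu'}$, $\mu' > \lambda$, to be saturated. For the first link I argue contrapositively: from a pair of orthogonal basic types I use the prime triples of Definition~\ref{primet} to build, by a Vaughtian-pair-style transfinite construction, a model of cardinality $\mu$ that omits a nonforking extension of one of the two types over some small submodel, hence is not saturated; since stability also yields a saturated model of cardinality $\mu$, the two are non-isomorphic, contradicting categoricity in $\mu$. For the second link, if some $M \in \K_{\mu'}$ with $\mu' > \lambda$ were not saturated, a type omitted by $M$ together with nonforking and $\lambda$-tameness again manufactures an orthogonal pair of basic types, contradicting unidimensionality. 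Combining the two links with the reduction of the first paragraph yields categoricity in all $\mu' > \lambda$.

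I expect the heart of the matter --- and the main obstacle --- to be this unidimensionality step: setting up orthogonality for the global frame $\s$ correctly, verifying the expected orthogonality calculus (existence and uniqueness of orthogonal/non-orthogonal components, the symmetry and transitivity features, the interaction with nonforking), and, above all, carrying out the Vaughtian-pair construction that converts a non-orthogonality failure into a genuine non-isomorphism in cardinality $\mu$. This is precisely where categoricity in $\lambda$, $\lambda$-tameness, and the existence of primes are all used simultaneously: tameness makes orthogonality a local matter, primes give enough control to ``generate'' a model realizing only the prescribed types, and categoricity in $\lambda$ anchors the base level so the induction up the cardinals can begin. Everything in the first paragraph, by contrast, is routine bookkeeping with Facts~\ref{extending} and \ref{extending-2} and standard good-frame theory.
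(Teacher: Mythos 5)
This statement is quoted by the paper as a known result: it is \cite[2.8]{vaseye}, and the paper gives no proof beyond the remark that it is obtained from Fact \ref{extending-2} together with the orthogonality calculus of good frames. Your outline follows exactly that strategy --- extend the type-full good $\lambda$-frame to a good $[\lambda,\infty)$-frame with amalgamation via Facts \ref{extending-2} and \ref{extending}, reduce categoricity transfer to showing every model above $\lambda$ is saturated, and mediate this through unidimensionality of the frame --- so at the level of approach you and the cited source agree, and your first-paragraph reductions (saturation and uniqueness of the categoricity model, joint embedding, stability in all $\mu' \ge \lambda$) are sound consequences of the global frame.

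The caveat is that your proposal stops precisely where the theorem begins. The two ``links'' --- (i) categoricity in a single $\mu > \lambda$ forces unidimensionality, and (ii) unidimensionality forces every model in every $\K_{\mu'}$, $\mu' > \lambda$, to be saturated --- are the actual content of \cite{vaseye} (building on Shelah's orthogonality calculus in \cite[\S III]{shelahaecbook} and on \cite{downward-categ-tame-apal}), and as written they are assertions rather than arguments. In particular, the Vaughtian-pair-style construction converting an orthogonal pair into a non-saturated model of size $\mu$, and the converse extraction of an orthogonal pair from a non-saturated model, both presuppose a developed orthogonality calculus for the extended frame (well-definedness and symmetry of non-orthogonality, its interaction with nonforking extensions and conjugation, the role of prime triples from Definition \ref{primet} in transferring non-orthogonality between base models); none of this is routine, and it is exactly what the cited paper establishes. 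So your text is a correct roadmap to the known proof, but not a self-contained proof: if the goal were to reprove the Fact, the missing middle is the orthogonality/unidimensionality machinery itself, not the bookkeeping around it.
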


To get the good frame, we will use the following result from the study of AECs axiomatized by $\Ll_{\omega_1, \omega}$. It is due to Shelah and already present in some form in \cite{sh48, sh87a} (see also \cite[II.3.4]{shelahaecbook}), but we cite from other sources and sketch some details here for the convenience of the reader. 

\begin{fact}\label{nice-stability-fact}
  Assume $2^{\aleph_0} < 2^{\aleph_1}$. Let $\psi$ be an $\Ll_{\omega_1, \omega}$-sentence. If $1 \le \Ii (\psi, \aleph_1) < 2^{\aleph_1}$, then there exists an AEC $\K$ such that:

  \begin{enumerate}
  \item $\tau (\K) = \tau (\psi)$.
  \item Any model in $\K$ satisfies $\psi$.
  \item For $M, N \in \K$, $M \lea N$ if and only if $M \lee_{\Ll_{\infty, \omega}} N$.
  \item $\K$ is categorical in $\aleph_0$ and has only infinite models.
  \item $\K$ has a type-full good $\aleph_0$-frame.
  \end{enumerate}
\end{fact}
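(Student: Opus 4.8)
The plan is to realize $\K$ as a well-chosen $\aleph_0$-categorical subclass of $\Mod(\psi)$, to extract $\aleph_0$-stability and amalgamation in $\aleph_0$ from $1 \le \Ii(\psi, \aleph_1) < 2^{\aleph_1}$, and then to feed these low-cardinal properties into Shelah's construction of good $\aleph_0$-frames from \cite[\S II.3]{shelahaecbook}; the statement is close to \cite[II.3.4]{shelahaecbook}, with \cite{sh48, sh87a} as the original sources, so what follows is a sketch of Shelah's argument.

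First I would pass to a setting where Galois types are transparent: following \cite{sh87a}, fix a countable fragment $F$ of $\Ll_{\omega_1, \omega}(\tau(\psi))$ with $\psi \in F$ and work with the relation $\lee_F$ of $F$-elementary substructure, so that Galois types for $(\Mod(\psi), \lee_F)$ coincide with $F$-syntactic types (equivalently, one may pass to the class of atomic models of a countable first-order theory). The set-theoretic heart, which I would cite rather than reprove, is Shelah's theorem that $2^{\aleph_0} < 2^{\aleph_1}$ together with $1 \le \Ii(\psi, \aleph_1) < 2^{\aleph_1}$ produces a countable $M^\ast \models \psi$ such that, writing $\sigma_{M^\ast}$ for its Scott sentence, the class $\Mod(\psi \wedge \sigma_{M^\ast})$ has a model of size $\aleph_1$, is $\aleph_0$-stable, and has amalgamation in $\aleph_0$. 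Were no such $M^\ast$ available, one would build a binary tree of countable models witnessing failures of stability or amalgamation and, via the weak diamond implied by $2^{\aleph_0} < 2^{\aleph_1}$, assemble $2^{\aleph_1}$ pairwise non-isomorphic models of size $\aleph_1$, contradicting the hypothesis.

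Now put $\K := \Mod(\psi \wedge \sigma_{M^\ast})$, ordered by $\lee_F$, which on $\K$ coincides with $\lee_{\Ll_{\infty, \omega}}$ by a standard back-and-forth argument using $\aleph_0$-stability and atomicity; this gives clause (3), and clauses (1) and (2) are immediate. As $\psi \wedge \sigma_{M^\ast} \in \Ll_{\omega_1, \omega}$ is preserved under $\Ll_{\infty, \omega}$-elementary substructure and unions of chains, $\K$ is an AEC (and is $\PC_{\aleph_0}$, being $\Mod$ of an $\Ll_{\omega_1, \omega}$-sentence); by Scott's theorem every countable member of $\K$ is isomorphic to $M^\ast$, so $\K$ is categorical in $\aleph_0$ and has only infinite models (clause (4)), and downward Löwenheim-Skolem for $F$ yields $\LS (\K) = \aleph_0$. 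Since $\K$ has a model of size $\aleph_1$ and is $\aleph_0$-categorical it has no maximal model in $\aleph_0$; categoricity in $\aleph_0$ gives joint embedding in $\aleph_0$; and $\K$ is $\aleph_0$-stable with amalgamation in $\aleph_0$ by the previous paragraph.

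It remains to get the good $\aleph_0$-frame (clause (5)). Now $\K$ is an $\aleph_0$-categorical, $\PC_{\aleph_0}$ AEC with $\LS (\K) = \aleph_0$, amalgamation, joint embedding, no maximal models, and stability in $\aleph_0$ -- exactly the input of Shelah's construction in \cite[\S II.3]{shelahaecbook}, which produces a type-full good $\aleph_0$-frame with nonforking given by the failure of $\aleph_0$-splitting. I expect the main obstacle, among the steps one actually carries out, to be the verification that this candidate frame satisfies \emph{all} the good-frame axioms -- in particular symmetry, the local-character clause, and the demand that \emph{every} type over a countable model (not merely a distinguished family of basic types) be covered, so that the frame is genuinely type-full -- together with checking that the passage to the subclass $\K$ did not destroy the amalgamation and stability obtained in the atomic setting; one leans throughout on $\aleph_0$-stability and the absence of maximal models in $\aleph_0$ (``$\aleph_0$-superstability''). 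The single deepest ingredient, however, is the cited weak-diamond argument of the second paragraph.
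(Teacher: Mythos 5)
Your overall route (pass to the Scott sentence of a suitable countable model, extract amalgamation and stability in $\aleph_0$ from $2^{\aleph_0}<2^{\aleph_1}$ together with $1\le \Ii(\psi,\aleph_1)<2^{\aleph_1}$, then feed this into a frame construction) is the same as the paper's, but there is a genuine gap at the joint where syntactic information gets converted into a \emph{type-full good $\aleph_0$-frame}, which is a statement about \emph{Galois} types. You assert at the outset that Galois types for $(\Mod(\psi),\lee_F)$ coincide with $F$-syntactic types. This is not automatic and is precisely the nontrivial content here: the class $\K$ is not a universal class, so Fact \ref{tame-univ} does not apply, and for general $\Ll_{\omega_1,\omega}$-definable classes Galois types can be strictly finer than syntactic types (this is exactly what the Hart--Shelah examples exploit). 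Syntactic $\aleph_0$-stability plus amalgamation in $\aleph_0$ does not by itself bound the number of Galois types over a countable model, nor does it give any tameness. In the paper this bridge is the Claim inside the proof of Fact \ref{nice-stability-fact}: via Keisler's omitting types theorem (Fact \ref{kei}) applied to an expansion by constants, together with Scott sentences of the finite expansions of the countable model, one shows that among any $\omega_1$ Galois types over $M\in\K_{\aleph_0}$ two agree on all finite restrictions; then \cite[3.12]{finitary-aec} converts this into Galois $\aleph_0$-stability and $(<\aleph_0,\aleph_0)$-tameness, and only then does Fact \ref{sh-vas} yield the type-full good $\aleph_0$-frame. (A smaller attribution slip: the stability step is Keisler's ZFC omitting-types theorem plus the cardinal arithmetic $2^{\aleph_0}<2^{\aleph_1}$ needed to keep the expanded sentence's model count below $2^{\aleph_1}$; weak diamond enters only for amalgamation, Fact \ref{ap-categ}.)

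Relatedly, your final step cites Shelah's \cite[\S II.3]{shelahaecbook} as a black box ``producing a type-full good $\aleph_0$-frame with nonforking given by non-splitting.'' As the paper itself notes in the proof of Theorem \ref{general-thm}, what \cite[II.3.4]{shelahaecbook} is known to deliver (see \cite[2.3.10]{jrsh875}) is only a \emph{semi-good} $\aleph_0$-frame with conjugation, and Shelah's frames there are not automatically type-full; this is exactly why the paper routes the argument through Fact \ref{sh-vas}, whose hypotheses include the $(<\aleph_0,\aleph_0)$-tameness that your sketch never verifies. So to close the argument you must either prove the Galois/syntactic identification (i.e., something like the paper's Claim), or settle for a semi-good frame, which is weaker than clause (5) as stated. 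The remaining ingredients of your sketch (the complete sentence via the Scott sentence of $M^\ast$, clause (3) via back-and-forth for atomic models rather than Fact \ref{elem-fact}, and the derivation of amalgamation from few models) are fine.
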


One key of the proof is the following classical consequence of Keisler's omitting type theorem \cite[5.10]{kei70}.

\begin{fact}\label{kei}
  Let $\psi$ be an $\Ll_{\omega_1, \omega}$-sentence and $L^{*}$ be a countable fragment of $\Ll_{\omega_1, \omega}$. If there is a model $M$ of $\psi$ realizing uncountably-many $L^{*}$-types over the empty set, then $\Ii(\psi, \aleph_1)= 2^{\aleph_1}$. 
\end{fact}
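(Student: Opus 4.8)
This is a classical result; one deduces it from Keisler's omitting types theorem \cite{kei70}, and the plan is to build $2^{\aleph_1}$ pairwise non-isomorphic models of $\psi$ of cardinality $\aleph_1$. First I would make two harmless reductions. Enlarging $L^{*}$, assume $\psi \in L^{*}$, so that $L^{*}$-elementary substructures and extensions of models of $\psi$ are again models of $\psi$; then $\psi$ automatically holds in all the countable models produced below. Fixing a finite $n$ for which $M$ realizes uncountably many $L^{*}$-types of $n$-tuples over $\emptyset$ and using the L\"owenheim--Skolem property for the countable fragment $L^{*}$, assume also $\|M\| = \aleph_1$ and fix $\langle \bar a_i : i < \omega_1 \rangle$ in $M$ realizing pairwise distinct $L^{*}$-types over $\emptyset$.

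The core is a recursion of length $\omega_1$, organized as a tree $\langle N_s : s \in {}^{<\omega_1}2 \rangle$ of countable models of $\psi$, increasing and continuous along each branch, together with, for each node $s$, an $L^{*}$-type $r_s$ over (a parameter from) $N_s$ that is not realized in $N_s$, chosen so that $N_{s ^\frown \langle 1 \rangle}$ realizes $r_s$ while $N_{s ^\frown \langle 0 \rangle}$, hence every $N_t$ with $s ^\frown \langle 0 \rangle \subseteq t$, omits $r_s$. At a node $s$ of length $\alpha$, the passage from $N_s$ to each $N_{s ^\frown \langle \epsilon \rangle}$ is carried out by the omitting types theorem, applied to the elementary $L^{*}$-diagram of $N_s$ together with $\psi$ (and, on the $\epsilon = 1$ side, a Henkin constant forced to realize $r_s$): the set of types one must keep omitting at this node is $\{ r_t : t \subsetneq s,\ t ^\frown \langle 0 \rangle \subseteq s \}$, which is \emph{countable} because $\alpha < \omega_1$. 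This countability is the crux --- globally there are $\aleph_1$ omitting commitments, but at each countable stage only countably many are active, so the omitting types theorem (which handles countably many types) can be invoked. For $\eta \in {}^{\omega_1}2$ set $N_\eta := \bigcup_{\alpha < \omega_1} N_{\eta \upharpoonright \alpha}$, a model of $\psi$ of cardinality $\aleph_1$; the uncountably-many-types hypothesis, via the reductions, is what keeps a suitable fresh $r_s$ available throughout and the chains strictly increasing.

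To finish, I would extract $2^{\aleph_1}$ non-isomorphic models. Each $N_\eta$ carries the filtration $\langle N_{\eta \upharpoonright \alpha} : \alpha < \omega_1 \rangle$, along which, by construction, its designated type over $N_{\eta \upharpoonright \alpha}$ is realized in $N_\eta$ exactly when $\eta(\alpha) = 1$; since any isomorphism between two such filtered models matches the two filtrations on a club of $\omega_1$, the subset of $\omega_1$ recording ``where the designated type gets realized'' is an isomorphism invariant modulo the club filter. Fixing a partition of $\omega_1$ into $\aleph_1$ stationary sets and letting $\eta$ run over the characteristic functions of unions of blocks of this partition produces $2^{\aleph_1}$ functions whose invariants are pairwise distinct modulo clubs, hence $2^{\aleph_1}$ pairwise non-isomorphic models $N_\eta$, and so $\Ii(\psi, \aleph_1) = 2^{\aleph_1}$. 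The main obstacle is exactly this bookkeeping: one must choose each $r_s$ so that it stays non-principal over the $L^{*}$-diagram of $N_s$ plus $\psi$ (so the omitting types theorem still applies on the $0$-side), so that such a type is available at every node, and --- the subtlest point --- so that the resulting invariant genuinely detects $\eta$ up to a club rather than being obscured by automorphisms of the filtrations or by ``accidental'' realizations of the designated types; a few degenerate configurations (e.g.\ a node model with no proper $L^{*}$-elementary extension) must also be excluded, using that $\psi$ has an uncountable model.
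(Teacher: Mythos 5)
The paper does not actually prove this statement --- it quotes it from Keisler \cite[5.10]{kei70} --- so I am judging your sketch on its own merits. Your overall template (a tree $\langle N_s : s \in {}^{<\omega_1}2\rangle$ of countable models coding $\eta \subseteq \omega_1$, a club-filter invariant, and the final count via a partition of $\omega_1$ into $\aleph_1$ stationary sets) is a standard and reasonable skeleton, and the endgame counting is fine. But the two points you yourself flag as ``obstacles'' are not bookkeeping; they are the entire content of the theorem, and as written the argument has a genuine gap at the non-isomorphism step. Your invariant is the set of $\alpha$ at which the \emph{designated} type $r_{\eta\upharpoonright\alpha}$ is realized, but $r_s$ is a hand-picked type \emph{with parameters} in $N_s$. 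If $f : N_\eta \cong N_\nu$ and the filtrations match on a club, then at a club point $\alpha$ with $\eta(\alpha)=1$ the map $f$ sends $r_{\eta\upharpoonright\alpha}$ to \emph{some} type over $N_{\nu\upharpoonright\alpha}$ realized in $N_\nu$ --- but there is no reason this image should be $r_{\nu\upharpoonright\alpha}$, and on the $0$-side your construction only omits $r_{\nu\upharpoonright\alpha}$ and the other designated types along $\nu$, not the image type. So $\eta(\alpha)=1$, $\nu(\alpha)=0$ produces no contradiction, and the ``invariant'' is not an isomorphism invariant. This is precisely why the classical statement is about types over the \emph{empty} set: realization of such types is automatically preserved by isomorphisms. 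Your sketch never exploits this feature of the hypothesis, and club-matching of filtrations cannot substitute for it; repairing it would require the $0$-side to omit an isomorphism-invariant \emph{family} of types over $N_{\eta\upharpoonright\alpha}$, a far stronger omitting demand than the countable-commitments bookkeeping you describe.

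The second gap is the availability and persistence of the $r_s$. The hypothesis concerns the fixed model $M$, but after the first application of the omitting types theorem your nodes $N_s$ need not $L^{*}$-elementarily embed into $M$, so it is not clear that there exists any type over $N_s$ that is omitted in $N_s$, realizable in an $L^{*}$-elementary extension satisfying $\psi$, and non-principal over the $L^{*}$-diagram of $N_s$ plus $\psi$ --- and, worse, that the earlier commitments $r_t$ remain non-principal over the enlarged diagrams at every later node of the $0$-side, which is what re-applying Keisler's omitting types theorem requires (realizing new elements at a $1$-step can in principle isolate a type you promised to keep omitting). A symptom of the problem is that your skeleton nowhere genuinely uses ``uncountably many $L^{*}$-types over $\emptyset$ realized in $M$'': as described, the same construction would run (or silently get stuck) for sentences with only countably many such types, for which the conclusion is false. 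Keisler's actual argument is organized around the realized types over $\emptyset$ themselves and a more delicate consistency/omitting analysis; acknowledging these points as ``the crux'' and ``the subtlest point'' does not discharge them, so the proposal falls short of a proof.
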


Another crucial result of Shelah will be used to obtain amalgamation from few models. See \cite[I.3.8]{shelahaecbook} or \cite[4.3]{grossberg2002} for a proof.

\begin{fact}\label{ap-categ}
Assume $2^{\lambda}< 2 ^{\lambda^{+}}$. Let $\K$ be an AEC.  If $\K$ is categorical $\lambda$ and $\Ii(\K, \lambda^+) < 2^{\lambda^{+}}$, then $\K$ has amalgamation in $\lambda$.
\end{fact}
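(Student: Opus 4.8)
The plan is to establish the contrapositive in the following strong form: if $\K$ is categorical in $\lambda$ but fails the amalgamation property in $\lambda$, then $\Ii(\K, \lambda^{+}) = 2^{\lambda^{+}}$. The set-theoretic hypothesis enters only through the weak diamond $\Phi_{\lambda^{+}}$, which by the Devlin--Shelah theorem is equivalent to $2^{\lambda} < 2^{\lambda^{+}}$; I will use it in the form that permits colourings with $2^{\lambda}$ values (still a consequence of $2^{\lambda} < 2^{\lambda^{+}}$).

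First I would isolate the obstruction. Since amalgamation fails in $\lambda$, fix $M_{0} \lea M_{1}$ and $M_{0} \lea M_{2}$ in $\K_{\lambda}$ that cannot be amalgamated over $M_{0}$; in particular $M_{0} \neq M_{1}$ and $M_{0} \neq M_{2}$, so both are proper $\K$-extensions. This is the only point where categoricity in $\lambda$ is used: given an arbitrary $N \in \K_{\lambda}$ and an isomorphism $h : M_{0} \to N$ (which exists by categoricity, as $M_{0}, N \in \K_{\lambda}$), transport $M_{1}, M_{2}$ along $h$ to obtain proper extensions $N_{1}, N_{2} \gea N$ — using the isomorphism-respecting axiom — with $(N, N_{1}, N_{2})$ still non-amalgamable, since any amalgam of $N_{1}, N_{2}$ over $N$ would pull back along the extensions of $h$ to an amalgam of $M_{1}, M_{2}$ over $M_{0}$. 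Thus \emph{every} model of size $\lambda$ carries a non-amalgamable pair of proper extensions.

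Next I would run the tree construction. By recursion on $\ell(\sigma)$, build models $M_{\sigma} \in \K_{\lambda}$ for $\sigma \in {}^{<\lambda^{+}}2$ that are $\lea$-increasing along $\trianglelefteq$ and continuous at limit nodes (so $M_{\sigma}$ has size $\lambda$ throughout), arranging that for every node $\sigma$ the two successors $M_{\sigma ^{\frown} 0}$ and $M_{\sigma ^{\frown} 1}$ form a non-amalgamable pair of extensions of $M_{\sigma}$ — possible by the previous paragraph since $M_{\sigma} \cong M_{0}$. For a branch $\eta \in {}^{\lambda^{+}}2$ put $M_{\eta} := \bigcup_{i < \lambda^{+}} M_{\eta \rest i}$, a model in $\K_{\lambda^{+}}$. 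Since the level filtrations $\langle M_{\eta \rest i} : i < \lambda^{+} \rangle$ are $\lea$-increasing, continuous, of size $\lambda$ and cofinal in $M_{\eta}$, a standard interleaving argument shows that any isomorphism $f : M_{\eta} \to M_{\nu}$ restricts, for a club of $\delta < \lambda^{+}$, to an isomorphism $M_{\eta \rest \delta} \to M_{\nu \rest \delta}$ (both sides being the $\subseteq$-substructure of $M_{\nu}$ induced on a common universe). \textbf{The main obstacle}, and the technical heart of the proof, is to interleave a weak-diamond ``devil's staircase'' into the recursion so that the branchings become \emph{detectable}: one designs a colouring $F$ on ${}^{<\lambda^{+}}2$ that, given at stage $\delta$ the data describing $M_{\eta \rest \delta}$ together with a candidate partial isomorphism, uses non-amalgamability to read off a value distinguishing the two possible successors; the sequence $g$ furnished by $\Phi_{\lambda^{+}}$ then lets one carry out the construction so that no isomorphism $M_{\eta} \cong M_{\nu}$ can exist unless $\eta$ and $\nu$ agree on a club. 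Getting the bookkeeping to align the two stationary sets in play (the club of closure points of $f$ and the prediction set of $\Phi_{\lambda^{+}}$) is the delicate step, and is exactly where $2^{\lambda} < 2^{\lambda^{+}}$ is consumed.

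Finally, granting that $M_{\eta} \cong M_{\nu}$ forces $\{ \delta < \lambda^{+} : \eta(\delta) = \nu(\delta) \}$ to contain a club, I would conclude with a purely combinatorial counting step. By Solovay's theorem on stationary splitting, fix pairwise disjoint stationary sets $\langle S_{\alpha} : \alpha < \lambda^{+} \rangle$ in $\lambda^{+}$, and for $X \subseteq \lambda^{+}$ let $\eta_{X} \in {}^{\lambda^{+}}2$ be the characteristic function of $\bigcup_{\alpha \in X} S_{\alpha}$. For distinct $X, Y$ the functions $\eta_{X}, \eta_{Y}$ disagree on the stationary set $\bigcup \{ S_{\alpha} : \alpha \in X \triangle Y \}$, hence $M_{\eta_{X}} \not\cong M_{\eta_{Y}}$; this exhibits $2^{\lambda^{+}}$ pairwise non-isomorphic members of $\K_{\lambda^{+}}$, contradicting $\Ii(\K, \lambda^{+}) < 2^{\lambda^{+}}$ and completing the argument.
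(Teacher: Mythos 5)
This statement is quoted in the paper as a known result (the paper gives no proof, citing Shelah \cite[I.3.8]{shelahaecbook} and Grossberg \cite[4.3]{grossberg2002}), so your proposal has to be measured against that classical argument. Your skeleton is indeed the right one: pass to the contrapositive, use categoricity in $\lambda$ to transport a non-amalgamable triple to every model of cardinality $\lambda$, build a tree $\seq{M_\sigma : \sigma \in {}^{<\lambda^+}2}$ of proper extensions with non-amalgamable successor pairs, take unions along branches, and count. But the entire content of the theorem lies in the step you explicitly defer as ``the main obstacle'': showing that the weak diamond turns this tree into $2^{\lambda^+}$ pairwise non-isomorphic models. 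As written, that step is not a proof but a placeholder, and the few concrete assertions you make about it are off. First, you invoke weak diamond ``with $2^{\lambda}$ values'' as a consequence of $2^{\lambda}<2^{\lambda^{+}}$; that is unjustified (and unnecessary: the standard proof uses $\Phi_{\lambda^+}$ with two colours but a rich domain, i.e.\ $F$ defined on triples $(\eta\rest\delta,\nu\rest\delta,f\rest\delta)$, which does follow from $2^{\lambda}<2^{\lambda^{+}}$ by the Devlin--Shelah argument). Second, your intended intermediate conclusion --- ``$M_\eta\cong M_\nu$ forces $\eta$ and $\nu$ to agree on a club'' --- is stronger than what $\Phi_{\lambda^+}$ can deliver: the principle supplies a single $g$ whose agreement with $F$ along any fixed $(\eta,\nu,f)$ is only stationary, and the classical proof does not establish a club-agreement dichotomy for arbitrary pairs of branches; instead it plays the one predicted $g$ against all candidate isomorphisms simultaneously to contradict $\Ii(\K,\lambda^+)<2^{\lambda^+}$ directly. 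Consequently your closing Solovay-splitting count rests on a statement the machinery does not give you. Third, there is a circularity in ``the sequence $g$ furnished by $\Phi_{\lambda^{+}}$ then lets one carry out the construction'': $F$ must be defined from the tree (it has to evaluate statements about the $M_{\eta\rest\delta}$, so the tree, with canonical universes, must be fixed \emph{before} $g$ is extracted), so $g$ cannot steer the recursion that builds the tree.

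There is also a concrete mathematical point you do not address and which the cited proofs must work around: at a splitting level $\delta$ with $\rho=\eta\rest\delta=\nu\rest\delta$, the club argument only gives that $f\rest M_\rho$ is an \emph{automorphism} of $M_\rho$ (or an isomorphism onto $M_{\nu\rest\delta}$), not the identity. Then $f\rest M_{\rho^\frown 0}$ into $M_\nu$ together with the inclusion of $M_{\rho^\frown 1}$ into $M_\nu$ do \emph{not} agree on $M_\rho$, so they do not directly amalgamate the chosen pair over $M_\rho$, and ``amalgamation up to an automorphism of the base'' is not the same as amalgamation of that specific triple. Handling this twist (via the precise definition of $F$, which records the partial map $f\rest\delta$ and asks whether it extends to an embedding of the appropriate successor, so that the contradiction is with the extendibility encoded by $F$ rather than with naive amalgamation of $M_{\rho^\frown 0},M_{\rho^\frown 1}$) is exactly the delicate bookkeeping in Shelah's and Grossberg's write-ups. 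So: right architecture, but the weak-diamond core --- where the hypothesis $2^{\lambda}<2^{\lambda^{+}}$ is actually consumed --- is missing, and the way you propose to use it would not go through as stated.
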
 

We will also use the following fact from {\cite[IV.1.12]{shelahaecbook}} (there it is assumed that $\lambda, \mu > \LS (\K)$ but the proof goes through without this hypothesis).

\begin{fact}\label{elem-fact}
  Let $\K$ be an AEC, let $\lambda \ge \LS (\K)$, and let $\mu$ be an infinite cardinal. If $\K$ is categorical in $\lambda$ and $\lambda = \lambda^{<\mu}$, then for any $M, N \in \K_{\ge \lambda}$, $M \lea N$ implies $M \lee_{\Ll_{\infty, \mu}} N$.
\end{fact}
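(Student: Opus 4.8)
The plan is to use the classical back-and-forth (Karp) characterization of $\Ll_{\infty,\mu}$-elementarity: for $M \subseteq N$, one has $M \lee_{\Ll_{\infty,\mu}} N$ exactly when there is a nonempty family $\mathcal{G}$ of partial isomorphisms from $M$ into $N$, each with domain of size $< \mu$, which is closed under restriction to $<\mu$-subsets, has the $(<\mu)$-back-and-forth property (any member extends within $\mathcal{G}$ so as to have any prescribed $<\mu$-sized subset of $|M|$ in its domain, and dually for $|N|$ and its range), and contains $\id_A$ for every $A \subseteq |M|$ with $|A| < \mu$. So the whole task is to produce such a $\mathcal{G}$ from $M \lea N$. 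First I would reduce to the case $\|M\| = \|N\| = \lambda$. Since $\lambda = \lambda^{<\mu}$ forces $\mu \le \lambda$, every $<\mu$-tuple from $M$ already sits in a $\lea$-submodel of size $\lambda$; writing $M$ and $N$ as continuous $\lea$-increasing unions of $\lambda$-sized submodels $M_i \lea N_i$ (via Löwenheim--Skolem--Tarski inside $N$ together with coherence) and checking that $\lee_{\Ll_{\infty,\mu}}$ passes to the appropriate colimits of back-and-forth families, it suffices to treat $M_i \lea N_i$, and $M_i \lea M_j$, $N_i \lea N_j$ for $i \le j$.

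For the case $\|M\| = \|N\| = \lambda$, I would build $\mathcal{G}$ by a recursion of length $\lambda$, the two hypotheses playing complementary roles. The arithmetic $\lambda = \lambda^{<\mu}$ lets me fix in advance an enumeration in order type $\lambda$ of all the ``requirements'' that the back-and-forth clause must meet, namely all $<\mu$-sized subsets of $|M|$ and of $|N|$, while keeping all intermediate models of size $\lambda$. Categoricity in $\lambda$ then drives the recursion: at each step the two $\lambda$-sized pieces that need to be matched are isomorphic, so the small partial data committed so far can be transported along such an isomorphism. A useful preliminary observation, which also takes care of the clause $\id_A \in \mathcal{G}$ (the one distinguishing $\lee_{\Ll_{\infty,\mu}}$ from mere $\Ll_{\infty,\mu}$-equivalence), is that by transitivity of $\lea$ any $\lea$-submodel $M_0 \lea M$ of size $\lambda$ is automatically $\lea N$; hence a matched $\lambda$-sized configuration can be started from a genuinely common submodel, with the two distinguished tuples literally equal.

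The step I expect to be the main obstacle is the absence of amalgamation: it is not among the hypotheses, and indeed extracting it (Fact~\ref{ap-categ}) costs extra set theory. A naive back-and-forth would, at a typical step, want to amalgamate the current two $\lambda$-sized models over their common matched part, which is not available here. The remedy is to avoid amalgamation altogether: build the resolutions of $M$ and of $N$ in lockstep, and whenever a new requirement must be satisfied use categoricity to pass to a fresh isomorphic copy of the $\lambda$-sized model instead of amalgamating, threading the isomorphisms so that all previously committed small partial maps survive. The ``forth'' direction --- ensuring that every element of $N$ eventually enters the range of some member of $\mathcal{G}$ extending the current partial isomorphism --- is the delicate point, and is precisely where the $\lambda = \lambda^{<\mu}$ bookkeeping and the categoricity-based reshuffling must be combined.
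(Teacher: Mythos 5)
Note first that the paper does not actually prove this statement: it is quoted from Shelah's book (\cite[IV.1.12]{shelahaecbook}), with only the remark that the hypothesis $\lambda,\mu>\LS(\K)$ there can be dropped. So your attempt can only be judged on its own merits, and as it stands it has a genuine gap. The framework is the right one (Karp's characterization including the clause $\id_A\in\mathcal{G}$, the observation that $\lambda^{<\mu}=\lambda$ forces $\mu\le\lambda$, and the remark that any $\lambda$-sized $M_0\lea M$ is also $\lea N$ by transitivity), but the entire mathematical content of the fact is concentrated in the step you leave open. Categoricity in $\lambda$ only supplies \emph{some} isomorphism between any two models of cardinality $\lambda$; it gives no control over where a prescribed set of size $<\mu$ is sent, and no way to insist that the isomorphism agree with a previously committed partial map. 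Your phrase ``use categoricity to pass to a fresh isomorphic copy \ldots threading the isomorphisms so that all previously committed small partial maps survive'' is precisely the assertion that the categoricity model is $({<}\mu)$-homogeneous with respect to its $\lea$-extensions, i.e.\ that a small partial isomorphism witnessed inside $\lambda$-sized configurations can always be extended to cover any prescribed $<\mu$-sized set on either side. That assertion is essentially equivalent to the fact being proved, and nothing in the sketch indicates how $\lambda^{<\mu}=\lambda$ together with categoricity yields it; this is exactly where Shelah's argument does real work, and it is not a routine transport-of-structure step. So what you have is a plan whose key lemma is missing, not a proof.

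Two further points about the reduction you propose. When $\|M\|<\|N\|$, lockstep resolutions $M_i\lea N_i$ of the same length do not exist (you would have to freeze $M$ while $N$ keeps growing, which is just the original problem again), so the reduction to $\|M\|=\|N\|=\lambda$ does not even typecheck in general. And the claim that $\lee_{\Ll_{\infty,\mu}}$ ``passes to the appropriate colimits of back-and-forth families'' is not a general fact: the elementary chain theorem fails for infinitary logics, so gluing back-and-forth systems along a chain (especially at uncountable cofinalities, where the chosen systems must cohere) requires a genuine argument that you do not supply. Both issues are best avoided by dropping the reduction altogether and working directly with a single global family of partial maps of size $<\mu$ from $M$ to $N$, each witnessed by an isomorphism between $\lambda$-sized $\lea$-submodels of $M$ and of $N$ (your transitivity remark shows this family contains every $\id_A$); but then verifying the extension property for that family is exactly the missing homogeneity lemma above, so the gap remains.
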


Finally, we will use \cite[5.8]{shvas-apal}:

\begin{fact}\label{sh-vas}
If $\K$ is categorical in $\aleph_{0}$, has amalgamation and no maximal models in $\aleph_0$, is $(<\aleph_0, \aleph_0)$-tame and is stable in $\aleph_{0}$, then $\K$ has a type-full good $\aleph_{0}$-frame.
\end{fact}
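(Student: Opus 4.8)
The plan is to verify directly that $\K_{\aleph_0}$, equipped with a suitable nonforking relation, satisfies each of the axioms of a type-full good $\aleph_0$-frame (as listed in \cite[3.8]{bova}). The ``algebraic'' axioms require no new idea: $\K_{\aleph_0}$ is nonempty and has no maximal models by hypothesis; it has amalgamation by hypothesis; and it has joint embedding because categoricity in $\aleph_0$ makes any two countable models isomorphic. Stability in $\aleph_0$ is also assumed, which gives the bound $|\gS(M)| \le \aleph_0$ needed for the frame. Since we aim for a \emph{type-full} frame, every nonalgebraic type over a countable model is declared basic, so there is nothing to prove about density of basic types. The whole content is therefore to produce a nonforking relation on types over countable models and to check that it is a good independence notion.

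For the nonforking relation I would use $\aleph_0$-nonsplitting: for $M_0 \lea M$ in $\K_{\aleph_0}$ and $p \in \gS(M)$, declare $p$ to not fork over $M_0$ when $p$ does not $\aleph_0$-split over $M_0$ (there are no countable $N_1, N_2$ with $M_0 \lea N_\ell \lea M$ and an isomorphism $N_1 \cong_{M_0} N_2$ sending $p \rest N_1$ to something other than $p \rest N_2$). Invariance and monotonicity are immediate from the definition. The key leverage is the finite-length tameness hypothesis together with stability: these give \emph{uniqueness} of nonsplitting extensions to universal models (two nonsplitting extensions of the same type to a model universal over $M_0$ must agree, since by tameness they are determined by their restrictions to finite sets, and nonsplitting forces these to coincide). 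The \emph{extension} axiom is obtained by first building, from stability, amalgamation and no maximal models, universal (``limit'') extensions of any countable model, extending $p \rest M_0$ nonsplittingly there, and then transferring along the universality; transitivity and continuity then follow by the usual bookkeeping.

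The two delicate axioms are local character and symmetry, and this is where categoricity in $\aleph_0$ enters. Local character for an $\aleph_0$-frame is a genuine superstability statement: along an increasing continuous chain $\seq{M_i : i \le \delta}$ of countable models ($\delta < \omega_1$), every $p \in \gS(M_\delta)$ must be nonsplitting over some $M_i$ with $i < \delta$, even for $\delta = \omega$. I would derive this from categoricity in $\aleph_0$ by showing that it forces ``no long splitting chains'' in $\aleph_0$, equivalently uniqueness of limit models of cardinality $\aleph_0$: a strictly increasing $\omega$-chain of splitting would produce two limit models over a fixed base that fail to be isomorphic over that base, contradicting the rigidity forced by having a single countable model together with stability and amalgamation. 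Symmetry is the last and hardest axiom; the standard route is to deduce it from the already-established uniqueness, extension, and local character, running the usual symmetry argument for superstable independence (building a suitable independent pair of limit models and using uniqueness of nonsplitting extensions to interchange the two sides). I expect symmetry to be the main obstacle, with the superstability input for local character a close second; once both are in place, the remaining frame axioms are routine and one concludes that $\K_{\aleph_0}$ together with this nonforking relation is a type-full good $\aleph_0$-frame.
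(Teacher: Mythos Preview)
The paper does not prove this fact; it is quoted from \cite[5.8]{shvas-apal} and invoked as a black box at the end of the proof sketch of Fact~\ref{nice-stability-fact}. So there is no ``paper's own proof'' to compare against.

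That said, your outline is essentially the strategy of the cited Shelah--Vasey paper: take $\aleph_0$-nonsplitting as the candidate independence relation on $\K_{\aleph_0}$ and verify the good-frame axioms, with $(<\aleph_0,\aleph_0)$-tameness driving uniqueness of nonsplitting extensions and categoricity in $\aleph_0$ supplying the superstability input. Two cautions. First, your argument for local character is too quick: categoricity in $\aleph_0$ tells you any two countable limit models are abstractly isomorphic, but the contradiction you describe needs them isomorphic \emph{over the base}; bridging this requires the additional observation that, under your hypotheses, the unique countable model is model-homogeneous (this is true, but it is the actual content of the step, not a triviality). Second, the symmetry step hides genuine work in the AEC setting---it does not follow formally from uniqueness, extension, and local character the way it does for forking in a first-order superstable theory, and in \cite{shvas-apal} it is handled by a separate argument (roughly, failure of symmetry produces an order-property-type configuration incompatible with stability). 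Your plan is correct in outline; these two steps are where the proof actually lives.
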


\begin{proof}[Proof sketch for Fact \ref{nice-stability-fact}]
  By \cite[6.3.2]{baldwinbook09}, there is a \emph{complete}  $\Ll_{\omega_1, \omega}$ sentence $\psi_0$ that implies $\psi$ and has a model of cardinality $\aleph_1$. Let $L^\ast$ be a countable fragment containing $\psi_0$ and let $\K := (\Mod (\psi), \preceq_{L^\ast})$.

 Note that $\K$ is an AEC with $\LS (\K) = \aleph_0$, which by completeness of $\psi_0$ is categorical in $\aleph_0$ and has only infinite models. Hence it has joint embedding in $\aleph_0$. Since it has a model of cardinality $\aleph_1$ by assumption, $\K$ also has no maximal models in $\aleph_0$. Moreover, $\K$ has amalgamation in $\aleph_0$ by Fact \ref{ap-categ}. Finally, by Fact \ref{elem-fact} with $\lambda = \mu = \aleph_0$ and since $\K$ has only infinite models, $M \lea N$ if and only if $M \lee_{\Ll_{\infty, \omega}} N$.

  It remains to show that $\K$ has a type-full good $\aleph_0$-frame. We first show:

  \underline{Claim}: Let $M \in \K_{\aleph_0}$. If $\seq{p_i : i < \omega_1}$ are Galois types over $M$, then there exists $i < j < \omega_1$ such that $p_i \rest A = p_j \rest A$ for all finite $A \subseteq |M|$.

  \underline{Proof of Claim}: By amalgamation in $\aleph_0$, we can find an uncountable model $N$ extending $M$ such that all the $p_i$'s are realized inside $N$. Say $p_i = \gtp (\ba_i / M; N)$. For $A \subseteq |M|$, let $\tau_A$ denote $\tau (\K) \cup \{c_a \mid a \in A\}$, where the $c_a$'s are new constant symbols. Whenever $M \lea N' \lea N$, let $N_A'$ denote the expansion of $N'$ to $\tau_A$ with $c_a^{N'} = a$. Observe that whenever $M \lea N' \lea N$ and $A \subseteq |M|$ is finite, then, since $\lea = \lee_{\Ll_{\infty, \omega}}$, we have that $M_A \lee_{\Ll_{\infty, \omega} (\tau_A)} N_A' \lee_{\Ll_{\infty, \omega} (\tau_A)} N_A$. 

  Let $L^{\ast\ast}$ be a countable fragment of $\Ll_{\omega_1, \omega}$ extending $L^{\ast}$ and containing Scott sentences of $M_A$ for all $A \subseteq |M|$ finite. We now apply Fact \ref{kei} to the following sentence:

  $$
  \bigwedge_{n\in \omega}\{\phi (c_{a_0}, \ldots c_{a_{n - 1}}) : \phi \in L^{\ast\ast}, a_0, \ldots, a_{n - 1} \in |M|, M \models \phi[a_0, \ldots, a_{n - 1}]\}
  $$

  Note that the models of this sentence are essentially the extensions of $M$. Moreover $2^{\aleph_0} < 2^{\aleph_1}$ implies that the sentence still has few models in $\aleph_1$. Thus Fact \ref{kei} indeed applies and we get in particular that there must exist $i < j$ such that $\tp_{L^{**}} (\ba_i / \emptyset; N_{|M|}) = \tp_{L^{**}} (\ba_j / \emptyset; N_{|M|})$. Now fix $N' \lea N$ countable containing $M$ and $\ba_i \ba_j$. Also fix $A \subseteq |M|$ finite. Since $M_{A} \lee_{\Ll_{\infty, \omega} (\tau_A)} N'_{A}$, there exists an isomorphism $f: N' \cong_A M$. Let $\bb_i := f (\ba_i)$, $\bb_j := f (\ba_j)$. 
 By equality of the types $(N'_A, \{ c_{b^i_k}^{N'_A} = a^i_k \}_{k<n}) \equiv_{L^{**} \rest \tau_{A \bb_i}} (N'_A, \{ c_{b^i_k}^{N'_A} = a^j_k \}_{k<n})$, hence $(M_A, \{ c_{b^i_k}^{M_A} = b^i_k \}_{k<n}) \equiv_{L^{**} \rest \tau_{A \bb_i}} (M_A, \{ c_{b^i_k}^{M_A} = b^j_k \}_{k<n})$. Since $L^{**}$ includes all the relevant Scott sentences, this means that there exists an automorphism $g$ of $M$ sending $\bb_i$ to $\bb_j$ and fixing $A$. Composing maps, we obtain an automorphism of $N'$ fixing $A$ and sending $\ba_i$ to $\ba_j$. Thus $p_i \rest A = p_j \rest A$, as desired. $\dagger_{\text{Claim}}$

 Combining the Claim with \cite[3.12]{finitary-aec}, we get that $\K$ is stable in $\aleph_0$ and is $(<\aleph_0, \aleph_0)$-tame for types of finite length. Therefore by Fact \ref{sh-vas}, $\K$ has a type-full good $\aleph_0$-frame.
\end{proof}

\section{Main results} 

In this section we prove the main theorems of this paper. We start by applying Fact \ref{elem-fact} to a universal class categorical in $\aleph_0$:

\begin{lemma}\label{univ-defin}
  Let $\K$ be a universal class in a countable vocabulary. If $\K$ is categorical in $\aleph_0$, then  for $M, N \in \K_{\geq \aleph_0}$, $M \subseteq N$ if and only if $M \lee_{\Ll_{\infty, \omega}} N$. Moreover, $\K_{\geq \aleph_0}$ is the class of models of an $\Ll_{\omega_1, \omega}$-sentence. 
\end{lemma}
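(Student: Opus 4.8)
The plan is to apply Fact \ref{elem-fact} with $\lambda = \mu = \aleph_0$. We are given that $\K$ (viewed as the AEC $(K, \subseteq)$ with $\LS(\K) = |\tau(K)| + \aleph_0 = \aleph_0$) is categorical in $\aleph_0$, and trivially $\aleph_0 = \aleph_0^{<\aleph_0}$. So Fact \ref{elem-fact} gives directly that for $M, N \in \K_{\ge \aleph_0}$, $M \subseteq N$ (i.e.\ $M \lea N$, since here $\lea = \subseteq$) implies $M \lee_{\Ll_{\infty, \omega}} N$. The converse direction — that $M \lee_{\Ll_{\infty, \omega}} N$ implies $M \subseteq N$ — is immediate from the definition of $\Ll_{\infty, \omega}$-elementary substructure (it includes $M \subseteq N$ as a substructure), so there is really nothing to prove there. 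One small point to be careful about: the categoricity in $\aleph_0$ plus closure under substructures forces all models in $K$ to be infinite (a finite substructure of an infinite model would be a non-isomorphic model of size $< \aleph_0$, contradicting that the unique countable model has no finite substructures — or more directly, if $K$ had a finite model, categoricity in $\aleph_0$ would be vacuous and the statement about $\K_{\ge\aleph_0}$ still makes sense, but we should note $K$ has arbitrarily large models via the chain axiom only if it has an infinite model; since we assume categoricity in $\aleph_0$, $\K_{\aleph_0}$ is nonempty, so infinite models exist). I would just remark that $\K$ has a countable model and hence, being closed under $\subseteq$-increasing unions, the relevant cardinals make sense.

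For the ``moreover'' clause, the plan is: by the first part, $\K_{\ge \aleph_0}$ is precisely the class of $\Ll_{\infty,\omega}$-elementary extensions of the unique countable model $M_0 \in \K_{\aleph_0}$. Let $\varphi_0$ be the Scott sentence of $M_0$, an $\Ll_{\omega_1, \omega}$-sentence (in the countable vocabulary $\tau(K)$) whose countable models are exactly the copies of $M_0$. I claim $\K_{\ge\aleph_0} = \Mod(\varphi_0)$. For one inclusion: if $N \in \K_{\ge \aleph_0}$, take any countable $\tau$-substructure — by the Löwenheim–Skolem–Tarski axiom for the AEC $\K$ (or just by closure of universal classes under countable substructures) there is $M_0' \subseteq N$ with $M_0' \in \K_{\aleph_0}$, so $M_0' \cong M_0$ and by the first part $M_0' \lee_{\Ll_{\infty,\omega}} N$; since $M_0'$ satisfies $\varphi_0$ and $\varphi_0 \in \Ll_{\omega_1,\omega} \subseteq \Ll_{\infty,\omega}$, elementarity gives $N \models \varphi_0$. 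For the other inclusion: suppose $N \models \varphi_0$; I want $N \in \K_{\ge \aleph_0}$. Since universal classes are closed under substructures and $\subseteq$-directed unions, it suffices to show every finitely generated (hence countable, as $\tau$ is countable) substructure of $N$ lies in $K$. Given a countable $A \subseteq |N|$, by Löwenheim–Skolem for $\Ll_{\omega_1,\omega}$ there is a countable $N_A \lee_{\Ll_{\infty,\omega}} N$ (indeed $\lee_{L^\ast}$ for a suitable countable fragment containing $\varphi_0$ and witnessing the relevant formulas) with $A \subseteq |N_A|$; then $N_A \models \varphi_0$, so $N_A \cong M_0 \in K$, and hence the substructure of $N$ generated by $A$, being a $\tau$-substructure of $N_A \in K$, is in $K$. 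Writing $N$ as the union of its countable substructures and using closure under $\subseteq$-increasing chains, $N \in K$, and $\|N\| \ge \aleph_0$ since $M_0$ is infinite.

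The only mild subtlety — and the step I'd expect to need the most care — is the second inclusion, specifically making sure that a countable elementary-in-the-right-fragment substructure $N_A$ of a model of $\varphi_0$ really is isomorphic to $M_0$: this needs that $\varphi_0$ is a \emph{complete} sentence (true of Scott sentences) so that its countable models form a single isomorphism class, together with the fact that $N_A$, being countable and satisfying $\varphi_0$, is such a model. An alternative, slightly cleaner route avoiding explicit manipulation of Scott sentences is to invoke Fact \ref{tarski-fact} (Tarski's presentation theorem) to write $K$ as $\Mod(T)$ for a universal $\Ll_{\infty,\omega}$ theory $T$, then use categoricity in $\aleph_0$ plus Löwenheim–Skolem to replace $T$ by a countable fragment's worth of consequences — but since the excerpt flags Fact \ref{tarski-fact} as ``not used,'' I'd prefer the Scott-sentence argument above, which keeps everything at the level of $\Ll_{\omega_1,\omega}$ as promised in the statement.
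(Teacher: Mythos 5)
Your proposal is correct and follows essentially the same route as the paper's (two-line) proof: the equivalence comes from Fact \ref{elem-fact} applied with $\lambda = \mu = \aleph_0$ (the converse direction being trivial), and the moreover clause is obtained from the Scott sentence of the unique countable model, which you verify in detail via downward L\"owenheim--Skolem for a countable fragment. The only step you state without justification---that a universal class is closed under $\subseteq$-directed unions, not just increasing chains---is routine (it follows from closure under substructures and chains by induction, or from Fact \ref{tarski-fact}), so there is no real gap.
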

\begin{proof}
  Use Fact \ref{elem-fact} with $\lambda = \mu = \aleph_0$ and recall that $\lea$ is just the substructure relation. For the moreover part, take the Scott sentence of a countable model. 
\end{proof}

Applying Fact \ref{nice-stability-fact}, we get directly:

\begin{cor}\label{aleph0-stable}
  Assume $2^{\aleph_0} < 2^{\aleph_1}$. Let $\K$ be a universal class in a countable vocabulary. If $\K$ is categorical in $\aleph_0$ and $1 \le \Ii (\K, \aleph_1) < 2^{\aleph_1}$, then $\K$ has a type-full good $\aleph_0$-frame.
\end{cor}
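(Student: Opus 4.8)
The plan is to deduce this from Fact \ref{nice-stability-fact} together with Lemma \ref{univ-defin}, the only real content being an identification of the relevant classes in cardinality $\aleph_0$.

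First I would apply Lemma \ref{univ-defin}: since $\K$ is a universal class in a countable vocabulary that is categorical in $\aleph_0$, there is an $\Ll_{\omega_1,\omega}$-sentence $\psi$ with $\tau(\psi) = \tau(\K)$ and $\Mod(\psi) = \K_{\ge \aleph_0}$, and moreover $M \subseteq N \iff M \lee_{\Ll_{\infty,\omega}} N$ for all $M, N \in \K_{\ge \aleph_0}$. Because $\aleph_1 > \aleph_0$ we have $\Ii(\psi, \aleph_1) = \Ii(\K, \aleph_1)$, so the hypothesis yields $1 \le \Ii(\psi, \aleph_1) < 2^{\aleph_1}$; combined with $2^{\aleph_0} < 2^{\aleph_1}$ this lets me invoke Fact \ref{nice-stability-fact} for $\psi$. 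It produces an AEC $\K'$ with $\tau(\K') = \tau(\K)$, all of whose (necessarily infinite) models satisfy $\psi$, with $M \leap{\K'} N \iff M \lee_{\Ll_{\infty,\omega}} N$, categorical in $\aleph_0$, and carrying a type-full good $\aleph_0$-frame $\s$.

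It remains to see that $\s$ is also a type-full good $\aleph_0$-frame for $\K$. A type-full good $\aleph_0$-frame is a structure determined by the abstract class of models of size $\aleph_0$ (with the induced ordering and its Galois types), so it suffices to check that $\K'_{\aleph_0}$ and $\K_{\aleph_0}$ coincide. For the underlying classes: every $M \in \K'_{\aleph_0}$ is a countable model of $\psi$, hence lies in $\Mod(\psi) = \K_{\ge \aleph_0}$ and so in $\K_{\aleph_0}$; conversely $\K'_{\aleph_0} \neq \emptyset$ (a good $\aleph_0$-frame forces a model in $\aleph_0$), and since $\K$ and $\K'$ are both categorical in $\aleph_0$ and closed under isomorphism they share their single $\aleph_0$-isomorphism type, so $\K_{\aleph_0} \subseteq \K'_{\aleph_0}$. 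For the orderings: for $M, N \in \K_{\aleph_0} = \K'_{\aleph_0}$, the relations $M \leap{\K'} N$, $M \lee_{\Ll_{\infty,\omega}} N$, $M \subseteq N$, $M \leap{\K} N$ are successively equivalent -- the first two by the defining property of $\K'$, the next two by the ``moreover'' clause of Lemma \ref{univ-defin} (applicable as $M, N \in \K_{\ge \aleph_0}$), and the last two by the definition of the AEC attached to a universal class. With $\K_{\aleph_0}$ and $\K'_{\aleph_0}$ literally equal as abstract classes, they have the same amalgamation bases and the same Galois types of finite tuples over their members, so $\s$ is a type-full good $\aleph_0$-frame for $\K$ as well.

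I expect no serious obstacle here; the argument is essentially bookkeeping, and the one point to be careful about is the identification in $\aleph_0$. In particular it is neither necessary nor (perhaps) true that $\K'$ equals $\K_{\ge \aleph_0}$ globally -- $\K'$ need not contain every model of $\psi$ -- but this is irrelevant, since a good $\aleph_0$-frame only looks at cardinality $\aleph_0$, where the two classes agree.
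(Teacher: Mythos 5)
Your proof is correct and takes essentially the same route as the paper: use Lemma \ref{univ-defin} to axiomatize $\K_{\ge \aleph_0}$ by an $\Ll_{\omega_1,\omega}$-sentence with $\lea$ coinciding with $\lee_{\Ll_{\infty,\omega}}$, apply Fact \ref{nice-stability-fact}, and identify the resulting class with $\K$. The only (harmless) difference is in the bookkeeping: the paper identifies the two classes on all of $\K_{\ge\aleph_0}$ (using $\aleph_0$-categoricity together with the fact that an AEC is determined by its behavior at the Löwenheim--Skolem--Tarski number), whereas you identify them only in cardinality $\aleph_0$ and correctly observe that this suffices because a type-full good $\aleph_0$-frame depends only on $\K_{\aleph_0}$ with its ordering and Galois types.
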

\begin{proof}
  By Lemma \ref{univ-defin}, $\K_{\geq \aleph_0}$ is axiomatized by an $\Ll_{\omega_1, \omega}$ sentence and the ordering on $\K_{\geq \aleph_0}$ coincides with $\lee_{\Ll_{\infty, \omega}}$. Since $\K$ is already categorical, $\K_{\geq \aleph_0}$ is equal to the class given by Fact \ref{nice-stability-fact}, so $\K$ has a type-full good $\aleph_0$-frame.
\end{proof}

We obtain one of our main theorems:

\begin{theorem}\label{main-theorem}
 Assume $2^{\aleph_0} < 2^{\aleph_1}$. Let $\K$ be a universal class in a countable vocabulary. If  $\K$ is categorical in $\aleph_{0}$ and $1 \leq \Ii(\K, \aleph_1) < 2^{\aleph_1}$, then:

\begin{enumerate}
\item $\K$ has arbitrarily large models.
\item If $\K$ is categorical in some uncountable cardinal then $\K$ is categorical in all uncountable cardinals.
\end{enumerate} 
\end{theorem}
\begin{proof}
By Corollary \ref{aleph0-stable}, $\K$ has a type-full good $\aleph_0$-frame. By facts \ref{tame-univ} and \ref{prime-univ}, $\K$ is $\aleph_0$-tame and has primes. Therefore Fact \ref{extending-2} yields (1) and Fact \ref{categ-fact} yields (2).
\end{proof}

Observe that the only place where we used the hypotheses ``$2^{\aleph_0} < 2^{\aleph_1}$ and $1 \le \Ii (\K, \aleph_1) < 2^{\aleph_1}$'' was to derive amalgamation and stability. Thus the conclusion of Theorem \ref{main-theorem} also holds in ZFC if we assume that $\K$ is universal, $\aleph_0$-categorical, has amalgamation and no maximal models in $\aleph_0$, and is stable in $\aleph_0$ (using Fact \ref{sh-vas} to get the good frame).

We can also replace the assumption of categoricity in $\aleph_0$ by categoricity in $\aleph_1$. To see this, we will use the following local version of Facts \ref{extending}, \ref{extending-2}, \ref{categ-fact}.

\begin{fact}\label{extend-cat}
Let $\K$ be an AEC and $\lambda \geq \LS(\K)$. If $\K$ has a type-full good $\lambda$-frame, is categorical in $\lambda$ and $\lambda^{+}$, is $(\lambda, \lambda^+)$-tame, and $\K_{\lambda^+}$ has primes, then $\K$ has a type-full good $\lambda^+$-frame and is categorical in $\lambda^{++}$.
\end{fact}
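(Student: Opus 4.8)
The plan is to split the conclusion into two tasks: first, promote the type-full good $\lambda$-frame to a type-full good $\lambda^+$-frame; second, transfer categoricity from $\lambda^+$ up to $\lambda^{++}$. Each task is the ``one cardinal up'' special case of the machinery behind Facts \ref{extending}, \ref{extending-2} and \ref{categ-fact} --- the proofs of those facts proceed by exactly such step-by-step arguments --- so the hypotheses, which are all localized at $\lambda$ and $\lambda^+$, should be enough.

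For the first task, the crux is amalgamation in $\K_{\lambda^+}$. The good $\lambda$-frame already supplies amalgamation, stability, and no maximal models in $\lambda$. I would then run the argument of Fact \ref{extending-2} through a single cardinal step: use $(\lambda,\lambda^+)$-tameness to transfer the frame's nonforking relation to types over models of size $\lambda^+$, and use the existence of primes in $\K_{\lambda^+}$ to amalgamate $\lambda^+$-sized extensions by decomposing them, over $\lambda$-sized resolutions, into one-element prime steps and mapping these steps coherently; categoricity in $\lambda^+$ is convenient here for pinning down the isomorphism types of the models being amalgamated (so they may be taken limit/saturated), but the engine is primes together with the $\lambda$-frame. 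Once amalgamation in $\lambda$ and $\lambda^+$ and $(\lambda,\lambda^+)$-tameness are in hand, I would define, for $M \in \K_{\lambda^+}$ and $p \in \gS(M)$, that $p$ does not fork over $M_0 \lea M$ with $M_0 \in \K_\lambda$ iff $p \rest M_1$ is the (frame-)nonforking extension of $p \rest M_0$ for every $M_1$ with $M_0 \lea M_1 \lea M$ in $\K_\lambda$, and verify the good $\lambda^+$-frame axioms. This verification is the same as in the proof of Fact \ref{extending}, now carried out at the single cardinal $\lambda^+$: invariance and monotonicity are immediate, existence and local character come from the $\lambda$-frame plus the resolution, uniqueness and continuity are where tameness enters, symmetry transfers, stability in $\lambda^+$ follows from stability in $\lambda$ plus tameness plus amalgamation, JEP in $\lambda^+$ is immediate from categoricity in $\lambda^+$, and no maximal models in $\lambda^+$ follows because a nonforking extension of a nonalgebraic type is again nonalgebraic, hence witnessed in a proper extension of size $\lambda^+$.

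For the second task, I now have a type-full good $\lambda^+$-frame, categoricity in $\lambda^+$, and primes in $\K_{\lambda^+}$, and I would mimic the proof of Fact \ref{categ-fact} (the orthogonality-calculus argument of \cite[2.8]{vaseye}) restricted to one cardinal. By categoricity in $\lambda^+$ the unique model of size $\lambda^+$ is saturated (a limit/prime model built over a $\lambda$-resolution is saturated, and every $\lambda^+$-sized model is isomorphic to it), and in the good $\lambda^+$-frame with primes every model of size $\lambda^{++}$ can be written as a filtration of copies of this saturated model obtained by prime and nonforking extensions; a back-and-forth over such filtrations then shows any two models of size $\lambda^{++}$ are isomorphic. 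No tameness or primes above $\lambda^+$ are needed because the back-and-forth only ever compares and extends models of size $\le \lambda^+$.

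The main obstacle will be amalgamation in $\lambda^+$ in the first task: without it neither the nonforking relation of the would-be $\lambda^+$-frame nor the back-and-forth of the second task makes sense, and securing it requires the delicate part of the proof of Fact \ref{extending-2}, namely making the one-element prime amalgamations along $\lambda$-sized resolutions cohere into a single amalgam of $\lambda^+$-sized models. Everything after that --- the axiom-checking for the good $\lambda^+$-frame and the categoricity back-and-forth --- is a routine transcription of the proofs of Facts \ref{extending} and \ref{categ-fact} to a single cardinal.
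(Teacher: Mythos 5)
Your decomposition is the same as the paper's: split the conclusion into (a) extending the good $\lambda$-frame one cardinal up and (b) a one-step categoricity transfer. Part (a) is fine and is exactly the paper's argument, namely that the proof of Fact \ref{extending-2} is local, so the good $\lambda$-frame together with $(\lambda,\lambda^+)$-tameness and primes in $\K_{\lambda^+}$ gives amalgamation and then a type-full good $\lambda^+$-frame.

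The genuine gap is in part (b). The paper does not re-run Fact \ref{categ-fact} there; it cites \cite[6.14]{downward-categ-tame-apal}, and the content of that result is precisely what your sketch elides. Your back-and-forth over filtrations by copies of the saturated model of cardinality $\lambda^+$ fails at the successor step: given $f_i : M_i \cong N_j$ and a new element $a$ of $M$, you must realize $f_i(\gtp(a/M_i;M))$ inside $N$, and there is no reason an arbitrary model $N$ of cardinality $\lambda^{++}$ realizes every nonalgebraic type over its $\lambda^+$-sized submodels --- that is essentially the statement that every $\lambda^{++}$-model is saturated, which is what you are trying to prove. The missing ingredient is a uni-dimensionality (no orthogonal pair of nonalgebraic types) statement extracted from categoricity in $\lambda$ and $\lambda^+$ via the orthogonality calculus of good frames; it does not follow from categoricity in $\lambda^+$ plus saturation of the unique $\lambda^+$-model, and it is not a routine one-cardinal transcription of Fact \ref{categ-fact}, whose proof uses categoricity in some $\mu>\lambda$ together with tameness and primes in all cardinals above $\lambda$; localizing it is exactly the work done in \cite[6.14]{downward-categ-tame-apal}. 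A secondary point: even producing, for an \emph{arbitrary} $\lambda^{++}$-model, a continuous filtration by saturated $\lambda^+$-models requires that unions of chains of saturated models be saturated at limit stages of cofinality $\le\lambda^+$, which is again a nontrivial input you invoke implicitly.
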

\begin{proof}
  The proof of Fact \ref{extending-2} is local, so $\K$ has a good $\lambda^+$-frame. That $\K$ is $\lambda^{++}$-categorical follows from \cite[6.14]{downward-categ-tame-apal}.
\end{proof}
\begin{theorem}\label{categ-upward}
 Assume $2^{\aleph_0} < 2^{\aleph_1}$. Let $\psi$ be a universal $\Ll_{\omega_1, \omega}$ sentence. If $\psi$ is categorical in $\aleph_1$, then it is categorical in all uncountable cardinals.
\end{theorem}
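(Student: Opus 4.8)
The plan is to imitate the proof of Theorem \ref{main-theorem}, feeding a good $\aleph_0$-frame into the tameness-and-primes machinery. The subtlety is that, unlike there, we cannot apply Corollary \ref{aleph0-stable} to the universal class $\K := (\Mod(\psi), \subseteq)$ itself, since categoricity in $\aleph_1$ need not imply categoricity in $\aleph_0$ for a universal class (for instance, the class of all $\mathbb{Q}$-vector spaces is universal in a countable vocabulary and categorical in $\aleph_1$, yet has infinitely many non-isomorphic countable models). So instead I would invoke Fact \ref{nice-stability-fact} with $\psi$ -- legitimate since $1 = \Ii(\psi, \aleph_1) \in [1, 2^{\aleph_1})$ -- to produce an auxiliary AEC $\K'$, ordered by $\lee_{\Ll_{\infty,\omega}}$, all of whose models satisfy $\psi$, which is $\aleph_0$-categorical and carries a type-full good $\aleph_0$-frame. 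Looking inside the proof of Fact \ref{nice-stability-fact}, $\K'$ is $(\Mod(\psi_0), \preceq_{L^\ast})$ for a complete $\psi_0 \in \Ll_{\omega_1,\omega}$ that implies $\psi$ and has an uncountable model; in particular $\K'$ has a model of size $\aleph_1$, and since every model of $\K'$ is a model of $\psi$ we get $1 \le \Ii(\K', \aleph_1) \le \Ii(\psi, \aleph_1) = 1$, so $\K'$ is $\aleph_1$-categorical as well.

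The key intermediate step is to show that $\K'$ and $\K$ coincide above $\aleph_1$, i.e.\ $\K'_{\ge \aleph_1} = \K_{\ge \aleph_1}$ as AECs. One inclusion is immediate. For the other, given $M \models \psi$ with $\|M\| \ge \aleph_1$, take by Löwenheim--Skolem a substructure $N^\ast \subseteq M$ of size $\aleph_1$; it satisfies $\psi$, hence (by $\aleph_1$-categoricity of $\psi$) is isomorphic to the size-$\aleph_1$ model of $\K'$, so $N^\ast \models \psi_0$; now apply Fact \ref{elem-fact} to $\K$ with $\lambda = \aleph_1 = \aleph_1^{<\aleph_0}$ and $\mu = \aleph_0$ to conclude $N^\ast \lee_{\Ll_{\infty,\omega}} M$, whence $M \models \psi_0$, i.e.\ $M \in \K'$. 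The same instance of Fact \ref{elem-fact} shows $\subseteq$ and $\lee_{\Ll_{\infty,\omega}}$ agree on $\K_{\ge \aleph_1}$, completing the identification of AECs. Consequently $\K_{\ge \aleph_1}$, being the restriction of a universal class, is $(<\aleph_0)$-tame (Fact \ref{tame-univ}) and has primes (the substructure generated by the base model together with the realizing element still works, being of the same cardinality and, by Fact \ref{elem-fact} again, a $\lee_{\Ll_{\infty,\omega}}$-extension, with isomorphisms of such substructures being $\K'$-embeddings); in particular $\K'$ is $(\aleph_0,\aleph_1)$-tame and $\K'_{\aleph_1}$ has primes.

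Now everything is in place to climb. Fact \ref{extend-cat} with $\lambda = \aleph_0$ -- good $\aleph_0$-frame, categoricity in $\aleph_0$ and $\aleph_1$, $(\aleph_0,\aleph_1)$-tameness, primes in $\K'_{\aleph_1}$ -- yields a type-full good $\aleph_1$-frame on $\K'$ together with categoricity in $\aleph_2$. Restricting to $\K^\ast := \K'_{\ge \aleph_1} = \K_{\ge \aleph_1}$ (with $\LS(\K^\ast) = \aleph_1$), which is $\aleph_1$-tame, has primes, carries a type-full good $\aleph_1$-frame, and is categorical in $\aleph_1$ and $\aleph_2$: Fact \ref{extending-2} gives amalgamation and a type-full good $[\aleph_1, \infty)$-frame on $\K^\ast$ (in particular arbitrarily large models), and Fact \ref{categ-fact} with $\lambda = \aleph_1$ gives categoricity of $\K^\ast$ in every $\mu > \aleph_1$. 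Since $\K_\kappa = \K^\ast_\kappa$ for all $\kappa \ge \aleph_1$, we conclude that $\psi$ is categorical in every uncountable cardinal.

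I expect the main obstacle to be the second step: pinning down the relationship between the auxiliary class $\K'$ manufactured by Fact \ref{nice-stability-fact} and the original universal class $\K$, so that (i) tameness and primes -- which come for free only from universality -- can legitimately be transported into $\K'$ above $\aleph_1$, and (ii) the categoricity eventually proved for $\K'$ descends to $\psi$. This requires keeping careful track of the two orderings ($\subseteq$ versus $\lee_{\Ll_{\infty,\omega}}$), extracting exactly the right instance of Fact \ref{elem-fact}, and using the \emph{proof} of Fact \ref{nice-stability-fact} (not merely its statement) to know that $\K'$ has a model of size $\aleph_1$ to begin with.
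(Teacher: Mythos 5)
Your proposal is correct and follows essentially the same route as the paper's proof: produce the auxiliary class via Fact \ref{nice-stability-fact}, use $\aleph_1$-categoricity together with Fact \ref{elem-fact} (with $\lambda=\aleph_1$, $\mu=\aleph_0$) to identify it with $\K$ above $\aleph_1$, transport tameness and primes from the universal class, and then apply Fact \ref{extend-cat} followed by Fact \ref{categ-fact} to $\K_{\ge\aleph_1}$. The only cosmetic differences are that you obtain the $\aleph_1$-model and the identification by peeking at $\psi_0$ inside the proof of Fact \ref{nice-stability-fact} and arguing through a size-$\aleph_1$ substructure, whereas the paper gets the $\aleph_1$-model from the good $\aleph_0$-frame and concludes $\K^\ast_{\ge\aleph_1}=\K_{\ge\aleph_1}$ from the general principle that an AEC is determined by its behavior at its L\"owenheim--Skolem--Tarski number.
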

\begin{proof}
 Let $\K$ be the class of models of $\psi$. Let $\K^\ast$ be the class obtained in  Fact \ref{nice-stability-fact}. Note that since $\K_{\aleph_1}^\ast \neq \emptyset$ (by the existence of the good $\aleph_0$-frame), $\K^\ast \subseteq \K$ and $\K$ is categorical in $\aleph_1$, $\K^\ast$ is also categorical in $\aleph_1$. Moreover $\K_{\aleph_1}^\ast = \K_{\aleph_1}$, because by Fact \ref{elem-fact} with $\lambda = \aleph_1$ and $\mu = \aleph_0$ for $M, N \in \K_{\aleph_1}$, $M \subseteq N$ if and only if $M \lee_{\Ll_{\infty, \omega}} N$. Since the behavior of an AEC is determined by its behavior in the Löwenheim-Skolem-Tarski number, $\K_{\ge \aleph_1}^\ast = \K_{\ge \aleph_1}$.

Now $\K^\ast$ has a type-full good $\aleph_0$-frame and since $\K$ is a universal class, $\K$ is $(\aleph_0, \aleph_1)$-tame. Since $\K_{\ge \aleph_1}^\ast = \K_{\ge \aleph_1}$, one can check that $\K^\ast$ is also $(\aleph_0, \aleph_1)$-tame. Furthermore, since $\K_{\aleph_1}$ has primes and $\K_{\aleph_1}^\ast = \K_{\aleph_1}$, $\K^{\ast}_{\aleph_1}$ also has primes. By Fact \ref{extend-cat}, $\K^\ast$ has a type-full good $\aleph_1$-frame and is categorical in $\aleph_2$. But this means that $\K_{\ge \aleph_1}$ has a type-full good $\aleph_1$-frame and is categorical in $\aleph_2$, so we can now apply Fact \ref{categ-fact}, to $\K_{\ge \aleph_1}$ to get the result.
\end{proof}

\section{Open questions and generalizations}

The following variation on an example of Morley shows that for every countable ordinal $\alpha$ there are universal classes with models only up to size $\beth_{\alpha}$.

\begin{example}\label{morley-example}
  Fix $\alpha  < \omega_1$. Let $\tau$ be a vocabulary consisting of unary predicates $\seq{P_i : i \le \alpha}$, a binary relation $E$ and a binary function $f$. Let $K$ be the class of $\tau$-structures $M$ such that:

  \begin{enumerate}
  \item $P_i^M \subseteq P_j^M$ for all $i < j < \alpha$.
  \item $P_0^M = \emptyset$.
  \item $|M| = P_\alpha^M$.
  \item $P_i^M = \bigcup_{j < i} P_j^M$ for $i$ limit.
  \item $x E^M y$ implies $x \in P_i^M$ and $y \in P_j^M$ for some $i < j< \alpha$.
  \item For any $i < \alpha$ and any two distinct $y_1, y_2 \in P_i^M$, $x:= f (y_1, y_2)$ satisfies:

    $$
    \left(x E y_1 \land \neg (x E y_2)\right) \lor \left(\neg (x E y_1) \land x E y_2\right)
    $$
  \end{enumerate}

  Then $K$ is a universal class in a countable vocabulary with amalgamation, joint embedding, and a model of cardinality $\beth_{\alpha} (0)$ but no models of cardinality $\beth_{\alpha} (0)^+$.\footnote{For a (possibly finite) cardinal $\mu$ and an ordinal $\alpha$, $\beth_\alpha (\mu)$ is defined inductively by $\beth_{0} (\mu) = \mu$, $\beth_{\beta + 1} (\mu) = 2^{\beth_\beta (\mu)}$, and $\beth_\delta (\mu) = \sup_{\beta < \delta} \beth_\beta (\mu)$ for $\delta$ limit.} Taking the disjoint union of $K$ with the class of $\mathbb{Q}$-vector spaces, we obtain (when $\alpha \ge \omega$) a universal class in a countable vocabulary which is categorical in an infinite cardinal $\lambda$ exactly when $\lambda > \beth_{\alpha} (0)$.
\end{example}

This shows that some conditions on the class are necessary to derive arbitrarily large models. However it is not clear to us that Theorem \ref{main-theorem} is optimal. Indeed it is not clear to us that the hypotheses on $\aleph_1$ are necessary (see Baldwin-Lachlan \cite{baldwin-lachlan} for a positive result when $K$ is axiomatized by a Horn theory):

\begin{question}
  If $K$ is a universal class categorical in $\aleph_0$ with a model in $\aleph_1$, must it be categorical in $\aleph_1$?
\end{question}

It would also be really nice to have a proof of Theorem \ref{main-theorem} in $ZFC$, so it is natural to ask the following question.

\begin{question}
Can we drop the hypothesis $2^{\aleph_{0}} < 2 ^{\aleph_{1}}$ from Theorem \ref{main-theorem}? Can it be dropped if we add more categoricity assumptions?
\end{question} 

Shelah \cite[\S I.6]{shelahaecbook} has given an example of an analytic AEC which under Martin's axiom is categorical in $\aleph_0$ and $\aleph_1$ yet does not have amalgamation in $\aleph_0$. It seems however plausible that there are no such examples which are universal classes.

We end this paper with a generalization of Theorem \ref{main-theorem}. The key is that we have not used the full strength of the universal assumption: all we used was tameness, having primes, and some definability. Using harder results of Shelah, Theorems \ref{main-theorem} and \ref{categ-upward} generalize to:

\begin{thm}\label{general-thm}
  Assume $2^{\aleph_0} < 2^{\aleph_1}$. Let $\K$ be an AEC with $\LS (\K) = \aleph_0$. Assume that $\K$ has primes, is $\aleph_0$-tame, and is $\PC_{\aleph_0}$ (see \cite[I.1.4]{shelahaecbook}).

  \begin{enumerate}
  \item If $\K$ is categorical in $\aleph_0$ and $1 \le \Ii (\K, \aleph_1) < 2^{\aleph_1}$, then $\K$ has arbitrarily large models and categoricity in \emph{some} uncountable cardinal implies categoricity in \emph{all} uncountable cardinals.
  \item If $\K$ is categorical in $\aleph_1$, then $\K$ is categorical in all uncountable cardinals.
  \end{enumerate}
\end{thm}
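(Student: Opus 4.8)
The plan is to run the proofs of Theorems \ref{main-theorem} and \ref{categ-upward} almost verbatim: in those proofs the \emph{only} essential use of universality was (i) to produce a type-full good $\aleph_0$-frame and (ii) to supply tameness and primes. Since (ii) is now a hypothesis, the real work is to replace the $\Ll_{\omega_1, \omega}$-definability input (Lemma \ref{univ-defin} together with Fact \ref{nice-stability-fact}) by the corresponding statement for $\PC_{\aleph_0}$ classes. Throughout, write $\K = \PC(T_1, \Gamma)$ with $T_1$ a countable first-order theory in a vocabulary $\tau_1 \supseteq \tau(\K)$ and $\Gamma$ a countable set of $\tau_1$-types.

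\textbf{Part (1).} First I would record the low-cardinal structure of $\K$: it has amalgamation in $\aleph_0$ by Fact \ref{ap-categ} (using $2^{\aleph_0} < 2^{\aleph_1}$, $\aleph_0$-categoricity, and $\Ii(\K, \aleph_1) < 2^{\aleph_1}$), joint embedding in $\aleph_0$ by $\aleph_0$-categoricity, and no maximal model in $\aleph_0$ because $\Ii(\K, \aleph_1) \ge 1$ provides a model of size $\aleph_1$ and hence, by the Löwenheim–Skolem–Tarski axiom, a proper $\lea$-extension of the (unique) countable model. The crux is then $\aleph_0$-stability together with $(<\aleph_0, \aleph_0)$-tameness for types of finite length — exactly the conclusion that the \emph{Claim} in the proof of Fact \ref{nice-stability-fact} delivered in the $\Ll_{\omega_1, \omega}$-definable case. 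One runs the same Keisler-omitting-types argument, but carried out over the presentation vocabulary $\tau_1$ rather than over $\tau(\K)$ (using that $\lea$ refines $\lee_{\Ll_{\infty, \omega}}$ on $\K_{\ge \aleph_0}$ by Fact \ref{elem-fact} with $\lambda = \mu = \aleph_0$); the added difficulty is that a single $\tau(\K)$-model has many $\tau_1$-expansions, so deducing $\Ii(\K, \aleph_1) = 2^{\aleph_1}$ from a failure of stability or finite tameness requires Shelah's refinement of the omitting-types machinery for $\PC_{\aleph_0}$ classes — this is the ``harder result of Shelah'' (the construction of good frames in \cite[\S II.3]{shelahaecbook}, see also \cite{sh87a, sh87b}, which is already set up at the level of $\PC_{\aleph_0}$ classes and is specialized to $\Ll_{\omega_1, \omega}$-sentences in Fact \ref{nice-stability-fact}). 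Given $\aleph_0$-stability and $(<\aleph_0, \aleph_0)$-tameness, Fact \ref{sh-vas} produces a type-full good $\aleph_0$-frame, and from there the argument is identical to Theorem \ref{main-theorem}: $\aleph_0$-tameness and primes feed Fact \ref{extending-2} (arbitrarily large models) and Fact \ref{categ-fact} (categoricity in some uncountable cardinal transfers to all uncountable cardinals).

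\textbf{Part (2).} Here I would mimic the proof of Theorem \ref{categ-upward}. Categoricity in $\aleph_1$ gives $\Ii(\K, \aleph_1) = 1 < 2^{\aleph_1}$, so the $\PC_{\aleph_0}$ version of Fact \ref{nice-stability-fact} furnishes an AEC $\K^\ast$ with $\LS(\K^\ast) = \aleph_0$, categorical in $\aleph_0$, carrying a type-full good $\aleph_0$-frame, with every model of $\K^\ast$ lying in $\K$, and with $\K^\ast$ also categorical in $\aleph_1$ (it has a model of size $\aleph_1$ since it has a good $\aleph_0$-frame, and every such model lies in $\K_{\aleph_1}$). As in Theorem \ref{categ-upward}, Fact \ref{elem-fact} with $\lambda = \aleph_1$, $\mu = \aleph_0$ identifies the orderings of $\K$ and of $\K^\ast$ on models of size $\ge \aleph_1$ with $\lee_{\Ll_{\infty, \omega}}$, so $\K^\ast_{\ge \aleph_1} = \K_{\ge \aleph_1}$. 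Since $\aleph_0$-tameness (hence $(\aleph_0, \aleph_1)$-tameness) and ``$\K_{\aleph_1}$ has primes'' are properties of $\K_{\aleph_1}$, they pass to $\K^\ast$; Fact \ref{extend-cat} then upgrades the good $\aleph_0$-frame to a type-full good $\aleph_1$-frame and yields categoricity in $\aleph_2$, and Fact \ref{categ-fact} applied to $\K^\ast_{\ge \aleph_1} = \K_{\ge \aleph_1}$ delivers categoricity in all uncountable cardinals.

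\textbf{Main obstacle.} The one genuinely new ingredient is the $\PC_{\aleph_0}$ analogue of Fact \ref{nice-stability-fact}. A universal class categorical in $\aleph_0$ is, by Lemma \ref{univ-defin}, outright $\Ll_{\omega_1, \omega}$-axiomatizable with the expected ordering, so the type-counting arguments of Keisler apply directly; a general $\PC_{\aleph_0}$ AEC need not be $\Ll_{\omega_1, \omega}$-definable, so those arguments have to be run over the presentation vocabulary $\tau_1$ while still controlling the number of non-isomorphic reducts in $\aleph_1$. I expect this — locating and correctly applying the $\PC_{\aleph_0}$ form of Shelah's good-frame construction, and (for part (2)) checking that the auxiliary class $\K^\ast$ it produces satisfies $\K^\ast_{\ge \aleph_1} = \K_{\ge \aleph_1}$ and inherits tameness and primes — to be the only nonroutine part; once a type-full good $\aleph_0$-frame is in hand, the rest is a direct replay of the universal-class arguments of Sections 2–3.
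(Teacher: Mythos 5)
Your overall architecture is the paper's: rerun Theorems \ref{main-theorem} and \ref{categ-upward}, with tameness and primes now hypotheses, and replace the $\Ll_{\omega_1,\omega}$-definability input by Shelah's machinery for $\PC_{\aleph_0}$ classes (the paper uses \cite[I.3.10]{shelahaecbook} for the $\aleph_0$-categorical subclass needed in part (2) and \cite[II.3.4]{shelahaecbook} for the frame). The genuine gap is in how you claim the frame is obtained. You assert that Shelah's $\PC_{\aleph_0}$ refinement of the omitting-types argument delivers $\aleph_0$-stability together with $(<\aleph_0,\aleph_0)$-tameness for finite types, so that Fact \ref{sh-vas} yields a \emph{type-full good} $\aleph_0$-frame. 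Neither conclusion is available in this generality, and the two obstacles you would have to overcome are exactly the ones that make the universal-class proof of the Claim in Fact \ref{nice-stability-fact} break down: first, the Keisler-style count over the presentation vocabulary $\tau_1$ does not bound $\Ii(\K,\aleph_1)$, since non-isomorphic $\tau_1$-models can have isomorphic $\tau(\K)$-reducts (you note this but then simply defer to ``Shelah's refinement,'' which does not in fact output stability here); second, the Scott-sentence/back-and-forth step of the Claim needs $\lea$ to \emph{coincide} with $\lee_{\Ll_{\infty,\omega}}$ (true for universal classes, where $\lea$ is $\subseteq$), whereas for an arbitrary $\PC_{\aleph_0}$ AEC Fact \ref{elem-fact} only gives the forward implication, so an isomorphism produced by the back-and-forth need not be a $\K$-embedding; this is also why the theorem only assumes $\aleph_0$-tameness and never claims $(<\aleph_0)$-tameness.

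What the paper actually does is cite \cite[II.3.4]{shelahaecbook} directly, and it explicitly flags that in the $\PC_{\aleph_0}$ case one only obtains a \emph{semi-good} $\aleph_0$-frame with conjugation (stability is weakened to almost stability, $|\gS(M)| \le \aleph_1$), invoking \cite[2.3.10]{jrsh875} and the observation that the downstream results (the analogues of Facts \ref{extending-2}, \ref{categ-fact}, and \ref{extend-cat}) go through for semi-good frames with conjugation. Your proposal skips this entirely because it assumes a full good frame; to repair it you must either justify genuine $\aleph_0$-stability and finite tameness in the $\PC_{\aleph_0}$ setting (which is not known from these hypotheses) or, as the paper does, work with the semi-good frame with conjugation and verify that the tameness-plus-primes transfer and categoricity-transfer machinery still applies to it. A smaller point: in part (2) your identification $\K^\ast_{\ge\aleph_1} = \K_{\ge\aleph_1}$ again quietly uses the ``if and only if'' form of Fact \ref{elem-fact}, which for a general AEC needs an argument rather than being immediate.
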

\begin{proof}
  As in the proof of Theorems \ref{main-theorem}, \ref{categ-upward} but using \cite[I.3.10]{shelahaecbook} to derive an $\aleph_0$-categorical subclass and \cite[II.3.4]{shelahaecbook} to derive the good $\aleph_0$-frame (actually in this case we only obtain a semi-good $\aleph_0$-frame with conjugation, see \cite[2.3.10]{jrsh875}, but this suffices for the proof).
\end{proof}

%\bibliography{}{}
%\bibliographystyle{amsalpha}

\end{document}